\crefname{theorem}{Theorem}{Theorems}
\crefname{thm}{Theorem}{Theorems}
\crefname{lemma}{Lemma}{Lemmas}
\crefname{lem}{Lemma}{Lemmas}
\crefname{remark}{Remark}{Remarks}
\crefname{rmk}{Remark}{Remarks}
\crefname{prop}{Proposition}{Propositions}
\crefname{notation}{Notation}{Notations}
\crefname{claim}{Claim}{Claims}
\crefname{defn}{Definition}{Definitions}
\crefname{definition}{Definition}{Definitions}
\crefname{cor}{Corollary}{Corollaries}
\crefname{example}{Example}{Examples}
\crefname{section}{Section}{Sections}
\crefname{figure}{Figure}{Figures}
\crefname{assumption}{Assumption}{Assumptions}
\newtheorem{theorem}{Theorem}
\newtheorem{claim}[theorem]{Claim}
\newtheorem{lemma}[theorem]{Lemma}
\newtheorem{cor}[theorem]{Corollary}
\newtheorem{prop}[theorem]{Proposition}
\newtheorem{definition}[theorem]{Definition}
\numberwithin{equation}{section}
\theoremstyle{definition}
\newtheorem{rmk}[theorem]{Remark}
\newcommand{\R}{\mathbb{R}}
\newcommand{\N}{\mathbb{N}}
\newcommand{\E}{\mathbb{E}}
\newcommand{\eps}{\varepsilon}
\newcommand{\B}{\mathbb{B}}
\newcommand{\D}{\mathbb{B}}
\newcommand{\ol}{\overline}
\newcommand{\Ub}{\mathbb{B}}
\begin{document}

\textheight = 21cm
\textwidth = 13cm

\title{\textsc{A characterisation of the continuum Gaussian free field in arbitrary dimensions.}}
\date{}
\author{Juhan Aru (EPFL) and Ellen Powell (Durham University)}
\maketitle
\vspace{-0.5cm}
\abstract{We prove that under certain mild moment and continuity assumptions, the $d$-dimensional continuum Gaussian free field is the only stochastic process satisfying the usual domain Markov property and a scaling assumption. Our proof is based on a decomposition of the underlying functional space 
in terms of radial processes and spherical harmonics.
}\bigskip \bigskip

%\noindent \textbf{Key words:} 

%\noindent \textbf{Subject classification}: 

%\tableofcontents
\section{Introduction}
 The continuum Gaussian free field (GFF) is a generalisation of Brownian motion, taking the index set to higher dimensions, which is defined on {the unit ball} as follows. 

\begin{definition}[Gaussian free field]
Let $d \geq 1$ and $\D \subseteq \R^d$ denote the open unit ball. 

\noindent The $d$-dimensional zero boundary continuum GFF in $\D$ is the centred Gaussian process $(h^{\D},f)_{f\in C_c^\infty(\R^d)}$ whose covariance is given by 
$$\E((h^{\D},f)(h^{\D},g)) = \int\int_{(\R^d)^2} f(z)G^{\B}(z,w)g(w)\, dz dw;\quad f,g\in {C_c^\infty(\R^d)},$$
where $G^{\D}$ denotes the zero boundary Green's function for the Laplacian in $\D$. {Note in particular that $G^\D$ is identically zero in $\R^d\setminus \D$, so that $(h^\B,f)$ is almost surely zero as soon as $f$ has support outside of $\D$.}
\end{definition}

In dimension $d = 1$, this corresponds to a slightly awkward definition of the Brownian bridge on the interval $(-1,1)$: in this concrete one-dimensional case the GFF can actually be defined as an a.s.\ continuous function, i.e. indexed by points instead of test functions. In $d \geq 2$ the GFF does not make sense as a pointwise defined function, thus the more general definition. However, one can still heuristically see the continuum GFF as a Gaussian height function parameterised by a domain $D \subseteq \R^d$ \cite{SheffieldGFF}. The Gaussian free field has played many roles in probability and mathematical physics since the 1970s: it is the stationary solution of the stochastic heat equation; is used to describe both free and interactive Euclidean Quantum field theories; it appears as a corrector in stochastic homogenisation; to name just a few. Recently, the $2d$ GFF has played a crucial role in the probabilistic study of statistical physics models, Schramm-Loewner evolution, Liouville quantum gravity and Liouville field theory \cite{Dub, MS3, DS11, DKRV}. It is both the proven and the conjectured scaling limit of several natural discrete height functions \cite{NS, Kenyon_GFF, RiderVirag,BLR}

There are many characterisations of Brownian motion that single it out among $1d$ stochastic processes and it is similarly natural to also ask how the GFF in higher dimensions can be characterised. Recently, the $2d$ continuum GFF was characterised as the only conformally invariant field, satisfying a domain Markov property and certain minimal moment and continuity assumptions \cite{BPR18,BPR20}. 

This short article provides a first characterisation of the $d-$dimensional continuum Gaussian free field for $d \geq 3$, answering Question 6.1 in \cite{BPR18}. It also provides a new characterisation of the GFF in $d= 2$ that relaxes the assumption of conformal invariance and {thereby generalises the main result of \cite{BPR18} via a new {more elementary} proof}. Our characterisation is based only on {scaling} and the domain Markov property and in particular, we do not even need to assume rotational invariance.

More precisely, but still somewhat informally, we prove that, under certain mild moment and continuity assumptions, the $d$-dimensional Gaussian free field is the only random Schwartz distribution in $d \geq 2$ that satisfies the following domain Markov property - for each ball, the field inside can be written as a sum of an independent scaled and translated copy of the original field plus the ``harmonic extension" of its behavior on the boundary. We prove the theorem by identifying the covariance structure as the Green's kernel (lighter step), and then proving Gaussianity (more involved). The second step in particular uses a completely different argument to that in \cite{BPR18} - {instead of working with circle averages, we use a decomposition of $L^2(\D)$ based on spherical harmonics}. The two steps in our proof are quite independent of each other (although we use the knowledge of the covariance kernel to simplify some arguments later on), but both make strong use of the domain Markov property. Our proof, which does not even assume rotational invariance, stresses the nice interplay between the GFF and harmonic functions.

Recently, there has been a lot of interest in proving scaling limit results for random surface models, e.g., \cite{SquareIce,BLRsurface, GM21}. The characterisation given in this article {and its mild generalizations below} could potentially be helpful for obtaining such results, by giving a new way to identify the GFF as a continuum scaling limit. Importantly, one that does not rely on conformal nor rotational invariance - conditions that are both very difficult to prove from lattice considerations (though there is important recent progress \cite{mano}). \footnote{
In fact, let us thank here H. Duminil-Copin and V. Tassion for asking us whether only rotational invariance should suffice for the characterisation of the 2$d$ continuum GFF. This discussion happened in a nice and friendly workshop in Fribourg, organised by I. Manolescu, who we would hereby like to thank too. We would finally like to thank N. Berestycki, J.C. Mourrat, G. Ray and W. Werner for many interesting and fruitful discussions on this topic, {and two anonymous referees who pointed out several places where more clarity was required}. J.A.\ was supported by Eccellenza grant 194648 of the Swiss National Science Foundation.  } 

As mentioned, essentially the only property that is needed to characterise the continuum Gaussian free field is its domain Markov property. {It should be emphasised that, as stated, this domain Markov property also encapsulates a scaling property for the field (that could however be disposed of to some extent, see comments below).}

\begin{definition}[Domain Markov Property (DMP) {with scaling} for balls]\label{def:dmp}
Let $d \geq 2$ and $\D \subseteq \R^d$ denote the open unit ball. %On some probability space, consider a
Suppose that $h$ is a random Schwartz distribution with support on $\D$, i.e. a random continuous functional $(h,f)$ indexed by $f \in C_c^\infty(\R^d)$ and zero as soon as $f$ has support outside of $\D$. 

 We define the random Schwartz distribution $h^{a+r\D}$ on domains $a + r\D$ as  the rescaled image of $h^\D$ under translation by $a$ and scaling\footnote{Note that if $h$ were a function and $(
    \cdot, 
    \cdot)$ was the $L^2$ inner product, this would be equivalent to $h^{a+r\B}(a+r\cdot)=r^{\frac{2-d}{2}}h^\B(\cdot)$, exhibiting half the scaling factor of $G^\B$.} by $r$: $$(h^{a+r\D},f(r\cdot+a))=r^{1+\frac{d}{2}}(h^\D,f(\cdot)).$$ 

We then say that $h$ satisfies the Domain Markov property if the following holds:
\begin{itemize}
\item Suppose that $a+r\D \subseteq \D$. Then 
   we can write $$(h^\D,f)_{f\in C_c^\infty(\R^d)}=(h^{a+r\D}_\D,f)_{f\in C_c^\infty(\R^d)}+(\varphi^{a+r\D}_\D,f)_{f\in C_c^\infty(\R^d)}$$ where the two summands are independent, $\varphi_\D^{a+r\D}$ is a stochastic process that a.s.\ corresponds to integrating against a harmonic function when restricted to $a+r\D$ and
   $h_\B^{a+r\B}$ is equal in law to a translated and scaled copy  $h^{a+r\D}$.
\end{itemize}
\end{definition}

We can now state the characterisation:

\begin{theorem}[Characterisation of the $d$-dimensional GFF]\label{thm_main}
Let $d \geq 2$ and $\D \subseteq \R^d$ denote the open unit ball. Suppose that $h$ is a random Schwartz distribution with support on $\D$, i.e. a random continuous functional $(h,f)$ indexed by $f \in C_c^\infty(\R^d)$ and zero as soon as $f$ has support oustide of $\D$.

If $h$ satisfies the following conditions, then for some constant $c$ the stochastic process $(h^{\D},f)_{f\in C_c^\infty(\R^d)}$ has the law of $c$ times a $d-$dimensional zero boundary Gaussian free field in $\D$.
\begin{enumerate}
    \item\label{dmp} \textbf{Domain Markov property {with scaling} for balls} as in \cref{def:dmp}.
          \item\label{mom} {\textbf{Moments} We have $\E((h^\D,f))=0$ and  $\E((h^\D,f)^4)<\infty$ for all $f\in 
        C_c^\infty(\R^d)$}. 
    \item\label{zbc} \textbf{Zero boundary conditions} For any sequence $(f_n)_{n\ge 0}$ of smooth positive functions with $\int f_n$ uniformly bounded, $d(\mathrm{supp}(f_n),0)\to 1$ as $n\to \infty$ and $\sup_n \sup_{r<1}\sup_{x,y\in \partial(r\B)} |f_n(x)/f_n(y)|<\infty$, we have that {$\E((h^\B,f_n)^2)\to 0$ as $n\to \infty$.} 
\end{enumerate}
\end{theorem}

We have not striven for most general technical assumptions, but rather have tried to keep the proofs light and self-contained. However, we believe that several assumptions can most likely be relaxed. Let us remark on those and the case $d = 1$.

\begin{itemize}
\item In fact, a similar characterisation holds also in $d = 1$ with basically the same proof. In this case one would just work directly in the space of continuous functions, and state the zero boundary condition pointwise.
\item We believe that the moment assumption can be probably relaxed using methods of \cite{BPR20}. 
\item Although using spherical harmonics is key in our proofs, one can also adapt the method to work for other reasonable domains, e.g. when one considers boxes instead of spheres both as the original domain and for the DMP. {Indeed, in this case one can still make sense of spherical harmonics defined on the boundary of the box, defined with respect to harmonic measure on the boundary as seen from the centre of the box. A characterisation of harmonic functions using such ``box averages'' will then give the required harmonicity for the covariance in the first part of the proof and everything in the second part will work the same way. %well with this well-chosen decomposition to "angular" and "radial" parts. 
More generally, a similar strategy ought to work for smooth convex domains, where there is a point of homothety inside the domain. The proof would, however, become more technical, and hence is not pursued here.} %The proof can most likely be extended to work for convex domains with smooth boundary. 
\item We use the same form of domain Markov property (DMP) as in \cite{BPR18}, but only for balls. The DMP plays a key role in our argument - it is used both in showing that the covariance kernel is harmonic off the diagonal, and in proving Gaussianity. With the current approach it seems difficult to relax the harmonicity condition for the extension of the boundary data, but it would be very interesting to determine if this is possible.
\item On the other hand, one can easily envisage replacing the condition of having an exact (scaled) copy of the field in the DMP with a more relaxed, martingale type of condition. {Namely, we could ask that $h^{\B}$ inside any sub-ball $a+r\B\subset \B$, can be decomposed as the independent sum of an a.s.\ harmonic function, and a random Schwartz distribution that has mean zero when tested against any test function}, {with some conditions on the second moment}. Indeed, this is known to suffice in the case of Brownian motion. We believe that our proof would adapt, as long as we imposed certain growth conditions for the variance of averages around a point, and certain homogeneity conditions. {For the sake of keeping things simple and short, we will not pursue this direction here.}
\item It might seem a bit surprising that we are not using rotational invariance. However, the reason is that the DMP already implies rather easily that the covariance is a harmonic function, and this is a very strong property. 
\item {Rather than assuming that $h$ is almost surely a Schwartz distribution, we could instead assume that: (1) $((h^\D,f))_{f \in C_c^\infty(\R^d)}$ is linear in $f$, and (2) the covariance $K_2(f,g):=\E((h^\D,f)(h^\D,g))$, is a continuous bilinear form on $C_c^\infty(\R^d)$\footnote{{with respect to the usual topology on $C_c^\infty(\R^d)$ in which $f_n\to f$ iff all derivatives of $f_n$ converge to the corresponding derivative of $f$, uniformly on the closure of ${\mathrm{supp}(f)}$}}. Indeed, we will use linearity of the process $(h^{\D},f)$ repeatedly, but will only use the fact that $f\mapsto (h^{\D},f)$ is a Schwarz distribution for justifying \eqref{K2limoutside} below, which would actually follow more directly had we assumed (2).} 
\item Finally, let us comment on the exact form of the zero boundary condition. In \cite{BPR18} this convergence was only asked for rotationally symmetric functions. We need the slightly generalised form (that is implied for example by conformal invariance and the rotationally invariant form) in only two places: to determine that the covariance is the Green's kernel and to obtain the uniqueness of the domain Markov property. Notice that for pointwise defined functions our condition is weaker than the usual pointwise zero boundary condition.
\end{itemize}

\begin{rmk}
When $d=2$ this is indeed a generalisation of the result in \cite{BPR18}. Namely, in \cite{BPR18} the authors assume that the law of a field $h^D$ is given for every simply connected domain $D$, and the family of laws satisfy conformal invariance (if $\varphi:D\to D'$ is conformal then the law of the pushforward of $h^D$ by $\varphi$, as a generalised function, is equal to that of $h^{D'}$) and a more general domain Markov property (whenever $D'\subset D$ we can write $h^D=h_D^{D'}+\varphi_D^{D'}$ as an independent sum, with $\varphi_D^{D'}$ harmonic in $D$ and $h_D^{D'}$ equal in law to $h^{D'}$). In particular, under these assumptions, $h^{\D}$ does satisfy \cref{dmp} of \cref{thm_main}. 

Therefore, if we take the assumptions of \cite{BPR18} as an input,\footnote{{also using the assumptions of \cite{BPR18} on $K_2$, which would equivalently work for our proof (as explained in the bullet point above)}} our result gives that $h^{\D}$ is equal in law to some multiple of the zero boundary Gaussian free field in $\D$. Then the assumption of conformal invariance identifies the law of $h^D$ for arbitrary simply connected $D$, and we reach the same conclusion as \cite{BPR18}.
\end{rmk}

We will now present the argument in three sections. First, we discuss some immediate consequences of the assumptions (along similar lines to \cite{BPR18}, so we will keep this brief). In Section \ref{sec:cov}, using the DMP, scaling and translation invariance, we show how to deduce that the covariance kernel is the Green's kernel (\cref{lem:cov}). Finally, the main part of the paper is Section \ref{sec:gauss}, where we prove Gaussianity (\cref{prop:gauss}) - we do this using solely the DMP, and a decomposition of the underlying functional space using spherical harmonics. Theorem \ref{thm_main} is an immediate consequence of \cref{lem:cov} and \cref{prop:gauss}. Sections \ref{sec:cov} and \ref{sec:gauss} can be read quite independently of each other, although we use the identification of the covariance kernel to simplify some arguments in the latter.

\begin{definition}[Scaling function]\label{def:s}
In what follows we write $s(r)$ for the function on $(0,\infty)$ defined by $-\log r$ when $d=2$ and $r^{2-d}$ when $d\ge 3$.
\end{definition}

\section{Immediate consequences}\label{sec:immediate_consequences}

{Here we discuss some immediate properties of our assumptions. The section is self-contained, but we remain brief, as similar properties have been shown in detail in the 2$d$ case in \cite{BPR18}.}

\paragraph{The domain Markov decomposition is unique.} Indeed, suppose that for some $a,r$ we had two  decompositions as in \ref{dmp} of \cref{thm_main}:  $$h^\B=h_\B^{a+r\B}+\varphi_\B^{a+r\B}=\tilde{h}_\B^{a+r\B}+\tilde{\varphi}_\B^{a+r\B}.$$ 
    Then for any $z\in a+r\B$, by harmonicity, there exists a sequence $(f_n)_{n\ge 0}$ of functions satisfying the zero boundary condition assumption \ref{zbc} from \cref{thm_main}, and such that $(\varphi,f_n(a+r\cdot))=\varphi(z)$ for all $n$ and any harmonic $\varphi$ in $a+r\B$. The zero boundary condition then implies that $(h_\B^{a+r\B}-\tilde{h}_\B^{a+r\B},f_n(a+r\cdot))\to 0$ a.s.\ along a subsequence as $n\to \infty$.  Thus, it must be the case that $\varphi_\B^{a+r\B}(z)=\tilde{\varphi}_\B^{a+r\B}(z)$ a.s. Applying this for a dense collection of $z$ and using harmonicity of $\varphi_\B^{a+r\B},\tilde{\varphi}_\B^{a+r\B}$ proves the uniqueness.
    
We will use the following consequences of this uniqueness repeatedly:
\begin{itemize}
   \item if $a+r\B \subset a'+r'\B\subset \B$ then \begin{equation}\label{eq:it_dmp}\varphi_\B^{a+r\B}-\varphi_\B^{a'+r'\B} \text{ is independent of } \varphi_{\B}^{a'+r'\B} \text{ and equal in law to } (r/r')^{\frac{2-d}{2}}\varphi_\B^{(a-a')+(r/r')\B};\end{equation}
    \item
   we can apply the domain Markov property in several balls at once. More precisely, if $B_1,\cdots, B_n$ are $n$ balls, and $h^\B=h^i+\varphi^i$ in each ball, then \begin{equation}
       \label{eq:multidmp}
 \varphi:=h^\B - \sum_{i=1}^n h^i \text{ is a.s.\ harmonic in } \cup_{i}B_i 
   \end{equation} and $h^i$ is independent of $\{\varphi,(h^j)_{j\ne i}\}$ for each $i$. Indeed, for any fixed $i$ we can write $\varphi=\varphi^i-\sum_{j\ne i}h^j$ and since $h^j$ is zero in $B_i$ for all $j\ne i$, $\varphi=\varphi^i$ is harmonic in $B_i$. But also for every $j\ne i$, $h^i$ is zero in $B_j$ and therefore $h^j$ is measurable with respect to $\varphi^i$. So the collection $\{\varphi,(h^j)_{j\ne i}\}=\{\varphi^i-\sum_{j\ne i} h^j,(h^j)_{j\ne i}\}$ is $\varphi^i$-measurable and therefore independent of $h^i$.
\end{itemize}

  \paragraph{The 2-point function.} For $z_1,z_2\in (1-\eps)\B$ with $|z_1-z_2|>2\eps$, define the harmonic functions $\varphi_\B^{z_1+\eps\B}, \varphi_\B^{z_2+\eps\B}$ according to the domain Markov decomposition of $h^\B$ in $z_1+\eps\B,z_2+\eps\B$ respectively. Then by \eqref{eq:it_dmp} and \eqref{eq:multidmp}, the quantity
 \[k_2(z_1,z_2)=\E(\varphi_\B^{z_1+\eps\B}(z_1)\varphi_\B^{z_2+\eps\B}(z_2))\] is well-defined, i.e., it does not depend on $\eps > 0$ satisfying the above conditions. Note that, by harmonicity, we can alternatively write \begin{equation}\label{k:conv}k(z_1,z_2)=\E((h^\B,\eta_{z_1})(h^\B,\eta_{z_2}))\end{equation} for any smooth functions { $\eta_{z_1}=\eta_{z_1}^\eps,\eta_{z_2}=\eta_{z_2}^\eps$ }of mass one, that are supported in $z_1+\eps\B,z_2+\eps\B$ and rotationally symmetric about $z_1,z_2$ respectively. 

 For every $\delta > 0$, the following crude upper bound for $z_1\ne z_2\in (1-\delta)\B$ is rather direct with $C = C(\delta)$:
 \begin{equation}\label{eq:var_bounds} |k_2(z_1,z_2)|\le C(1+s(|z_1-z_2|)). \end{equation}

 To justify this, observe that by Cauchy--Schwarz and the domain Markov property, it suffices to show that for any $z\in (1-\delta)\B$ and $\eps < \delta$
 \begin{equation}\label{eq:circlebound} \E(\varphi_\B^{z+\eps\B}(z)^2)\le C(1+s(\eps)).\end{equation}
 Indeed, as long as $d(z_i,\partial \D)>|z_1-z_2|/2$ for $i=1,2$ we can set  $\eps=|z_1-z_2|/2$ in the definition of $k_2(z_1,z_2)$; otherwise we crudely set $\eps = \delta/2$ (leading to the dependence of $C$ on $\delta$).
 
 Let us first consider the case $z=0$. If $2^{-n}\le \eps\le 2^{-(n-1)}$, then the domain Markov property implies that $\E(\varphi_\B^{\eps\B}(0)^2)\le \E(\varphi_\B^{2^{-n}\B}(0)^2)$. Further, by the domain Markov property again and also scaling, the right hand side can be written as the sum $\sum_{m=0}^{n-1} 2^{m(d-2)/2} X_m$, with the $X_m$ i.i.d.\ each having the law of  $\varphi_{\B}^{0.5\B}(0)$. Adding up the variances gives the desired bound.
  
 When $z\ne 0$ with $|z|=r<1$, let $r' = 1/(1+r)\in (1/2,1)$, $\eps' = \eps/(1+r)$ and $z'=-r'z$.
   By applying the Markov property for $h_\B^{z'+r'\B}$ in $\eps'\B$ we can write $$\varphi_\B^{\eps'\B}(0)  = \varphi_\B^{z' +r'\B}(0) +  \tilde{\varphi}(0)$$
   where the summands are independent and by translation invariance and scaling, $\tilde{\varphi}(0)$ has the law of $(r')^{2-d}$ times $\varphi_\B^{z+\eps\B}(z)$. But since the variance of the summands add up, the variance of $\tilde{\varphi}(0)$ is no greater than the variance of $\varphi_\B^{\eps'\B}(0)$. Hence the claim follows from the case $z = 0$.

\paragraph{The 4-point function.}
Similarly, due to \eqref{eq:multidmp}, for $z_1,\dots, z_4\in \B$ with $\min_{j\ne i}{|z_i-z_j|}>2a_i$ and $d(z_i,\partial \B)>a_i$ for each $i$, $k_4(z_1,\dots, z_4)=\E(\prod \varphi_\B^{z_i+a_i\B}(z_i))$ is well-defined, i.e., it does not depend on the choice of $a_i$ satisfying the above conditions. Again we can write this as $\E(\prod_{i=1}^4(h^\B,\eta_{z_i}))$ for any smooth functions $(\eta_{z_i})_{1\le i \le 4}$, with $\eta_{z_i}$ having mass one, being rotationally symmetric about $z_i$ and supported in $z_i+a_i\B$ for $i=1,2,3,4$. 

Using the same argument as in the 2-point case, one can now bound $|k_4(z_1, \dots, z_4)|$. Indeed, by Holder's inequality it suffices this time to obtain bounds on the fourth moments of $\varphi_\B^{z+\eps\B}(z)$ for $\eps>0$. This can be done very similarly to the 2-point bound. One first treats $z = 0$ using the i.i.d nature of increments - the only change here is that when expanding $\E((\sum_{m=0}^{n-1} \alpha_m X_m)^4)$ with independent $X_m$s one must consider both $\sum \alpha_m^4 \E(X_m^4)$ and $\sum_{m\ne l} \alpha_m^2\alpha_l^2 \E(X_m^2)\E(X_l^2)$ - and then transports this bound to general $z$ using translation invariance and scaling. The result is that for any given $\delta>0$, there exists $C=C(\delta)$ such that \begin{equation}\label{eq:circlebound2}
\mathbb{E}((\varphi_\B^{z+\eps\B}(z))^4)\le C(1+s(\eps)^2)\end{equation} for any $z\in (1-\delta)\B$ and $\eps<\delta$ (we omit the details). This leads to the  final conclusion: there are constants $C(\delta)>0$ such that for any distinct $z_1,z_2,z_3,z_4\in (1-\delta)\B$ \begin{equation}\label{eq:moment_bounds}  |k_4(z_1,\dots, z_4)|^4\leq C(\delta)\prod_{i=1}^4 (1+\max_{j\ne i} s(|z_i-z_j|)^2).\end{equation}

\paragraph{The diagonal contribution of the 2-point function is negligible.}

We can now show that  $k_2$ is the covariance function of the field, in the sense that for any  $f_1,f_2 \in C_c^\infty(\B)$, if  we have 
\begin{align}\label{Kk}K_2(f_1,f_2)& := \mathbb{E}((h^{\D},f_1)(h^\D,f_2)) \nonumber \\ 
& =\iint_{\B^2} f_1(z_1)f_2(z_2)k_2(z_1,z_2) \, dz_1 dz_2,\end{align}
where the right-hand side is well defined as the limit
\begin{equation}\label{eq:Kklimit}\lim_{a\to 0}\iint_{\B^2\setminus\{|z_1-z_2|\le a\}} f_1(z_1)f_2(z_2)k_2(z_1,z_2) \, dz_1 dz_2\end{equation}
(see just below for a proof). In other words, we show that $K_2$ is an integral kernel with density $k_2$, and that there is no ``diagonal contribution'' of $K_2$, so it does not matter than $k_2$ is only defined off the diagonal. This already rules out, for example, the possibility that the field is a white noise or one of its derivatives.\footnote{We would like to thank A. Sep\'{u}lveda for raising the need for extra clarity on this point.}

Let us now justify \eqref{Kk}. {For each $w\in \D$ and $0<\eps<d(w,\partial \B)$, let  $\eta_{w}^\eps$ be a unit-mass radially symmetric mollifier in the $\eps$ ball around $w$, as in \eqref{k:conv}.}
Then if $f_1,f_2\in C_c^\infty(\B)$ {(we fix these from now on)}, 
the functions {$$f_i^\eps(z):= \int_{\B} f_i(w)\eta_{z}^\eps(w) \, dw =\int_{\B} f_i(w) \eta_w^\eps(z) \, dw$$} converge to $f_i$ in $C_c^\infty(\D)$ for $i=1,2$. {The assumption that $h^\D$ is almost surely a Schwartz distribution therefore implies that \begin{equation}\label{K2liminside}K_2(f_1,f_2)=\mathbb{E}(\lim_{\eps\to 0} (h^{
\D},f_1^\eps)(h^{\B},f_2^\eps))\end{equation}
where the limit inside the expectation is almost sure. Moreover, $(h^{\D},f_1^\eps)(h^{\D},f_2^\eps)$ is uniformly bounded in $L^2$. To see this, we bound \begin{align*}\mathbb{E}((h^{\D},f_1^\eps)^2(h^{\D},f_2^\eps)^2) & = \iint f_1(u)f_1(v)f_2(w)f_2(x)\mathbb{E}((h^{\D},\eta_u^\eps)(h^{\D},\eta_v^\eps)(h^\D,\eta_w^\eps)(h^\D,\eta_x^\eps)) \, du dv dw dx \\ & \le \sup_{i=1,2}\sup_{z\in \B} |f_i(z)|  \iint |\mathbb{E}((h^{\D},\eta_u^\eps)(h^{\D},\eta_v^\eps)(h^\D,\eta_w^\eps)(h^\D,\eta_x^\eps))| \, du dv dw dx,\end{align*} 
where the integral on the right of the final display is uniformly bounded in $\eps$ \footnote{We would like to thank G. Woessner for pointing out the obvious problem with the argument claiming to show this in the published version.}. Indeed, by \eqref{eq:multidmp}, if $u,z,s,w$ are all at distance $\ge 2\eps$ from one another, we have $|\mathbb{E}((h^{\D},\eta_u^\eps)(h^{\D},\eta_v^\eps)(h^\D,\eta_w^\eps)(h^\D,\eta_x^\eps))|=|k_4(u,v,w,x)|$, and the integral of $|k_4|$ over this region is uniformly bounded in $\eps$ using \eqref{eq:moment_bounds} (see below \eqref{eq:k4sphere_crude} for a more detailed explanation). If three or more of the points $u,v,w,x$ are within distance $2\eps$ of one another, we can use Cauchy--Schwarz, the domain Markov property and scaling to bound $|\mathbb{E}((h^{\D},\eta_u^\eps)(h^{\D},\eta_v^\eps)(h^\D,\eta_w^\eps)(h^\D,\eta_x^\eps))|\le \sup_{z\in \B}\E((h^\B,\eta_{z}^\eps)^4)=
\sup_{z\in \B} \E((\varphi_\B^{z+2\eps\B}(z)+(h_\B^{2\eps\B},\eta_z^\eps))^4)\le C(1+s(\eps)^2)$ for $C$ uniform over $\eps$ (small enough). This region has area of order $\eps^{2d}$, while $s(\eps)^2$ is either logarithmic or equal to $\eps^{4-2d}$, so the integral over this region is also uniformly bounded (in fact going to $0$) in $\varepsilon$. Finally, if exactly two of the points (say $u$ and $v$) are within distance $2\eps$, and so $d_x,d_w>2\eps$ where $d_x:=\max\{|x-u|,|x-v|,|x-w|\}$ and $d_w:=\max\{|w-u|,|w-v|,|w-x|\}$, similar reasoning gives $|\mathbb{E}((h^{\D},\eta_u^\eps)(h^{\D},\eta_v^\eps)(h^\D,\eta_w^\eps)(h^\D,\eta_x^\eps))|\le C \sqrt{1+s(\eps)^2}\sqrt{(1+|s(d_x)|)}\sqrt{1+|s(d_w)|}$. Now, the integral over $x,w$ is again uniformly bounded because of the size of the singularity, and the integral over $u, v$ has area $\eps^{d}$; thus we obtain uniform boundedness in $\eps$ and conclude this case too. 
Thus the family $(h^{\D},f_1^\eps)(h^{\D},f_2^\eps)$ is uniformly integrable and we can take the limit outside of the expectation in \eqref{K2liminside}, i.e., we have \begin{equation}\label{K2limoutside} K_2(f_1,f_2)=\lim_{\eps\to 0} \mathbb{E}((h^{
\D},f_1^\eps)(h^{\B},f_2^\eps)).\end{equation} 

We are now in a good position to prove \eqref{Kk}, but still need to deal with the contribution to $K_2$ from points ``near the diagonal''. For this, we break up the right-hand side of \eqref{K2limoutside}} as 
$$\lim_{\eps\to 0} \left(\iint_{|z_1-z_2|>2\eps}f_1(z_1)f_2(z_2)K_2(\eta_{z_1}^\eps, \eta_{z_2}^\eps)\, dz_1 dz_2 + \iint_{|z_1-z_2|\le 2\eps}f_1(z_1)f_2(z_2)K_2(\eta_{z_1}^\eps, \eta_{z_2}^\eps) dz_1 dz_2\right)$$
where by
\eqref{k:conv}, we have $$\iint_{|z_1-z_2|>2\eps}f_1(z_1)f_2(z_2)K_2(\eta_{z_1}^\eps, \eta_{z_2}^\eps)\, dz_1 dz_2 = \iint_{|z_1-z_2|>2\eps}f_1(z_1)f_2(z_2)k_2(z_1,z_2)\, dz_1 dz_2$$ for each $\eps.$ On the other hand, $K_2(\eta_{z_1}^\eps, \eta_{z_2}^\eps)^2\le {\E((h^\B,\eta_{z_1}^\eps)^2)\E((h^\B,\eta_{z_1}^\eps)^2)}$ for each $\eps$ by Cauchy-Schwarz, where by  \eqref{eq:circlebound}, the domain Markov property and scaling, we can bound $\E((h^\B,\eta_{z}^\eps)^2)=\E(\varphi_\B^{2\eps\B}(z)^2)+\E((h_\B^{2\eps\B},\eta_z^\eps)^2)\le C(1+s(\eps))$ for some constant $C$. Note that this constant $C$ does not depend on $\eps$, and the bound holds for all $z$ in the compact supports of $f_1$ and $f_2$ simultaneously. This implies that $$\limsup_{\eps\to 0} \iint_{|z_1-z_2|\le 2\eps}f_1(z_1)f_2(z_2)K_2(\eta_{z_1}^\eps, \eta_{z_2}^\eps) dz_1 dz_2=0$$ and hence that $$\lim_{\eps\to 0} \iint_{|z_1-z_2|>2\eps}f_1(z_1)f_2(z_2)k_2(z_1,z_2)\, dz_1 dz_2 \text{ exists and is equal to } K_2(f_1,f_2)$$ as required. A similar argument appears in  \cite[Proof of Lemma 2.18]{BPR18}.

\paragraph{Spherical averages.} 
For $z\in \B$ and $\eps<d(z,\partial \B)$ we define $h_\eps(z)=\varphi_\B^{z+\eps\B}(z)$ to be the $\eps$-\emph{spherical average} of $h^\B$ around $z$. 
{
The spherical average admits the following natural approximations. Write $\rho_n^{\eps}$ for a sequence of smooth  test functions with total mass one that are rotationally symmetric about $z$ and supported in the annular region $z+\{(1-2^{-2n})\eps\B\setminus (1-2^{-n})\eps\B\}$ for each $n$. Then $\E((h^\B,\rho_n^{\eps})^2)=\E(h_\eps(z)^2)+\E((h_\B^{z+\eps\B},\rho_n^{\eps})^2)$ for each $n$ by the Markov property, where the second term on the right-hand side goes to $0$ as $n\to \infty$ by the zero boundary condition assumption. It therefore follows that $$\E(h_\eps(z)^2)=\lim_{n\to \infty} \E((h^\B,\rho_n^{\eps})^2)$$
and moreover this convergence is uniform for, say, $\eps \in (\delta,1)$.

Using the same construction as above with $z=0$ and $\eps=r_n$ converging to $1$ as $n\to\infty$, we also see that $\lim_{n\to \infty} \E(h_{r_n}(0)^2)=\lim_{n\to \infty} \E(h^\B,\rho_n^{r_n})^2)$, and this final limit is $0$, again by the zero boundary condition. Since $\E(h_r(0)^2)$ is decreasing as $r\uparrow 1$, this implies that}
\begin{equation}\label{eq:circav0bc}
\E(h_r(0)^2)\searrow 0 \text{ as } r\uparrow 1.
\end{equation}

\section{Covariance is the Green function}\label{sec:cov}
Let us start by showing that scaling and translation invariance together with the domain Markov property already imply that the covariance kernel is the Green's function:

\begin{prop}\label{lem:cov}
The function $k_2(x,y)$ (defined for $x\ne y)$ is a positive multiple of the zero boundary Green's function. In particular, by \eqref{Kk}, this implies that the bilinear form $K_2$ is a multiple of the map $(f_1,f_2)\mapsto \iint_{\B^2} G^\B(z_1,z_2)f_1(z_1)f_2(z_2) \, dz_1 dz_2$. 
\end{prop}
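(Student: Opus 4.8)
The plan is to show that $k_2(x,y)$, as a function of $x$ for fixed $y$ (and by symmetry conversely), is harmonic away from $x=y$, has the correct boundary behaviour, and has the correct type of singularity on the diagonal. Since the zero boundary Green's function is characterised up to a positive multiple by exactly these three properties, this will suffice. The key input throughout is the domain Markov property together with the already-established facts \eqref{eq:it_dmp}, \eqref{eq:multidmp}, \eqref{eq:var_bounds}, \eqref{eq:moment_bounds} and \eqref{eq:circav0bc}.

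\emph{Harmonicity off the diagonal.} Fix $y \in \B$ and take $x \neq y$. Using the representation $k_2(x,y) = \E(\varphi_\B^{x+\eps\B}(x)\,\varphi_\B^{y+\eps\B}(y))$, the plan is to exploit that $\varphi_\B^{y+\eps\B}$ is a.s.\ harmonic in $y+\eps\B$ and, crucially, that by \eqref{eq:multidmp} the field $\varphi_\B^{x+\eps\B}$ (resp.\ $\varphi_\B^{y+\eps\B}$) restricted to the \emph{other} small ball is a harmonic function of the spatial variable there. Concretely, I would fix $\eps$ small enough that $x+\eps\B$ and $y+\eps\B$ are disjoint and contained in $\B$, write $k_2(x',y) = \E(\varphi_\B^{x+\eps\B}(x')\,\varphi_\B^{y+\eps\B}(y))$ for $x'$ ranging over $x+\eps'\B$ with $\eps' < \eps$, and use the mean-value property: since $\varphi_\B^{x+\eps\B}$ is a.s.\ harmonic in $x+\eps\B$, its spatial average over a small sphere about $x$ equals its value at $x$, and linearity of expectation transfers the mean-value property to $x' \mapsto k_2(x',y)$. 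Hence $k_2(\cdot, y)$ satisfies the mean-value property, so it is harmonic in $\B \setminus \{y\}$; by symmetry the same holds in the second variable. The bound \eqref{eq:var_bounds} guarantees local integrability and boundedness needed to justify interchanging expectation and averaging.

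\emph{Boundary values and the diagonal singularity.} For the boundary condition, I would use the zero boundary assumption in the form already distilled as \eqref{eq:circav0bc}: as $x \to \partial \B$, the relevant spherical-average variances vanish, which forces $k_2(x,y) \to 0$. For the singularity on the diagonal, the scaling and translation invariance pin down the local behaviour: near $x = y$ the leading singular part of $k_2(x,y)$ must match that of $s(|x-y|)$ (up to a positive constant), since \eqref{eq:var_bounds} shows $k_2$ grows at most like $s(|x-y|)$ and the domain Markov increments, being genuinely nonzero by the scaling in \eqref{eq:it_dmp}, force a matching lower-order blow-up. The difference between $k_2$ and a suitable positive multiple $c\, G^\B$ is then a bounded harmonic function on $\B$ with zero boundary values, hence identically zero by the maximum principle.

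\emph{Main obstacle.} I expect the delicate step to be the diagonal analysis: rigorously identifying that the singularity of $k_2$ is \emph{exactly} of Green's-function type (i.e.\ $\sim c\,s(|x-y|)$ with the same constant $c$ as the boundary-normalised Green's function), rather than merely bounded by it. The harmonicity and boundary-vanishing are comparatively clean consequences of the mean-value property and \eqref{eq:circav0bc}, but controlling the diagonal requires extracting the precise strength of the singularity from the self-similar structure of the increments in \eqref{eq:it_dmp}. A clean way to finesse this is to argue that $k_2(\cdot,y) - c\,G^\B(\cdot,y)$ extends to a harmonic function across $y$ for the correct $c$ (so the singularities cancel), and is bounded near $\partial\B$ with vanishing boundary data; one then concludes it is zero. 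The constant $c$ is the same for all $y$ by translation invariance and scaling, giving the stated multiple of the Green's function, and the identity \eqref{Kk} transfers the conclusion to the bilinear form $K_2$.
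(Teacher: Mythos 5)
Your overall skeleton --- harmonicity off the diagonal, a boundary condition, a diagonal singularity analysis, then uniqueness --- is the same as the paper's, and your harmonicity step (the mean-value property inherited, via linearity of expectation, from the a.s.\ harmonicity of the Markov decomposition) is exactly the paper's argument. But there are two genuine gaps. The first is the boundary step: you claim that \eqref{eq:circav0bc} forces the \emph{pointwise} limit $k_2(x,y)\to 0$ as $x\to\partial\B$. However, \eqref{eq:circav0bc} only controls spherical averages centred at the origin, and since rotational invariance is not assumed, there is no way to transfer that decay to $\E(\varphi_\B^{x+\eps\B}(x)^2)$ for $x$ near $\partial\B$; the only available bound there is \eqref{eq:circlebound}, which does not decay at all. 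The zero boundary assumption \ref{zbc} of \cref{thm_main} is inherently an integrated statement, and the paper respects this: it verifies the \emph{weak} boundary condition $(k_y,f_n)\to 0$ for the test sequences $f_n$ of \ref{zbc}, via Cauchy--Schwarz against $\E((h^\B,f_n)^2)=K_2(f_n,f_n)\to 0$, and then invokes a characterisation of $G^\B$ (\cite[Lemma 3.7]{WPgff}) that remains valid with this weak boundary condition in place of a pointwise one. Your final maximum-principle argument, which needs pointwise boundary values of $k_2(\cdot,y)-c\,G^\B(\cdot,y)$, cannot be run as stated.

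The second gap is the diagonal. You correctly identify it as the delicate point and state the right goal --- that $k_2(\cdot,y)-b\,s(|\cdot-y|)$ should extend harmonically across $y$ for some $b>0$ --- but you give no mechanism for producing such a $b$; ``the increments force a matching blow-up'' is not an argument. The paper's mechanism is B\^ocher's theorem: by \eqref{eq:var_bounds}, the map $w\mapsto k_2(0,w)+c_1 s(|w|)+c_2$ is a \emph{positive} harmonic function on a punctured ball, so B\^ocher's theorem decomposes it as $c_3 s(|w|)+\nu(w)$ with $\nu$ harmonic, which is precisely the required statement at $y=0$; positivity of the resulting $b$ follows since $\E(h_r(0)^2)=\int k_2(0,z)\,\rho_0^{r}(dz)$ increases to $\infty$ as $r\downarrow 0$ by the domain Markov property. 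Note also that transferring this from $y=0$ to general $y$ is not just ``translation invariance and scaling'': the single field $h^\B$ is not translation invariant (only the family indexed by balls is). The paper makes one further use of the domain Markov property, decomposing $h^\B$ in $y+2\eps\B$ so that the singular part of $k_2(\cdot,y)$ is carried by the independent copy $h^{y+2\eps\B}_\B$, whose covariance near its centre is pinned down by assumption \ref{inv} and the $y=0$ case, while the harmonic part $\varphi_\B^{y+2\eps\B}$ contributes only $O(1)$ by Cauchy--Schwarz and \eqref{eq:var_bounds}. With those two repairs your outline matches the paper's proof; without them, the boundary and diagonal steps are assertions rather than arguments.
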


\begin{proof} 
Write $G^\B$ for the zero boundary Green's function in $\B$. 
We are going to use the following characterisation of $G^\B$ (see for example \cite[Lemma 3.7]{WPgff}). 
\begin{itemize}
    \item[$\star$] Suppose that for $y\in \B$, $k_y(x)$ is a harmonic function defined in in $\B\setminus \{y\}$, such that $k_y(x)-bs(|x-y|)$ is bounded in a  neighbourhood of $y$ for some $b > 0$ and such that $(k_y,f_n)%:=\lim_{a\to 0} \int_{\B\setminus \{|z-y|>a\}} k_y(z)f_n(z)  \, dz$} converges to $
    \to 0$ as $n\to \infty$ for any sequence of functions $f_n$ as in our zero boundary condition \ref{zbc} of \cref{thm_main}.\footnote{Note that for any fixed $y$, the support of $f_n$ will not intersect $y$ and so $(k_y,f_n)$ makes perfect sense as the integral of $k_y$ against $f_n$ in $\B$.}\footnote{The proof in \cite{WPgff} works exactly the same if we use this ``zero boundary condition'' for $k_y$ rather than a pointwise zero boundary condition.} Then $k_y(x)=bG^\B(x,y)$ for all $x\ne y$; $x,y\in \B$.
\end{itemize}

Let us first see that for fixed $y\in \B$ the function $k_y(\cdot):=k_2(y,\cdot)$ is harmonic away from $\{y\}$. {For this, suppose that $\eta>0$ and $x\in \B$ is such that $|x-y|\wedge d(x,\partial \B)>\eta$. Then if we choose $a$ such that $|x-y|>\eta+2a$, we have $k_2(x,y)=\mathbb{E}(\varphi_{\B}^{x+(\eta+a)\B}(x)\varphi_{\B}^{y+a\B}(y))$ by definition, where $\varphi_{\B}^{x+(\eta+a)\B}$ is almost surely harmonic in $x+\eta\B$. This means that $\varphi_{\B}^{x+(\eta+a)\B}$ satisfies $\varphi_{\B}^{x+(\eta+a)\B}(x)=\int \varphi_{\B}^{x+(\eta+a)\B}(w) \rho_x^\eta(dw) $ (the mean value property) with probability one, where $\rho_{x}^\eta$ is uniform measure on $\partial(x+\eta\B)$. Moreover, the domain Markov decomposition (and its uniqueness) mean that $\varphi_{\B}^{w+(a/2)\B}(w)=\varphi_{\B}^{x+(\eta+a)\B}(w)+Z$ with $Z$ centered and independent of $\varphi_{\B}^{y+a\B}(y)$, so again using the definition of $k_2$, we have that $k_2(w,y)=\mathbb{E}(\varphi_{\B}^{x+(\eta+a)\B}(w)\varphi_{\B}^{y+a\B}(y))$ for each $w\in \partial(x+\eta\B)$. The conclusion is that for any given $\eta>0$, and any $x\in \B$ such that $|x-y|\wedge d(x,\partial \B)>\eta$:}
\begin{equation} \label{eqn:mvpk2} k_2(x,y)=\int k_2(w,y) \rho_{x}^\eta(dw).\end{equation}
Note that the support of $\rho_x^\eta$ does not include $y$ by assumption, so the integral against $k_2(\cdot, y)$ is perfectly well defined. (The proof {above} is verbatim that given in \cite[Lemma 2.9]{BPR18}, which applies directly to all $d\ge 2$.) \eqref{eqn:mvpk2} implies that $k_y$ is indeed harmonic in $\B\setminus \{y\}$. Note that in particular we have continuity in this region, which we did not assume a priori. 

 {We now check the boundary condition for $k_y$. If $f_n$ are a sequence of functions as in \ref{zbc} of \cref{thm_main}, we have $(k_y,f_n)=\int k_2(y,x)f_n(x) \, dx$ by defintion of $k_y$. Then, by the harmonicity shown above (and because $f_n$ has support outside of some fixed neighbourhood $B_\delta(y)$ of $y$ for $n$ large enough), this integral is equal to $\int k_2(w,x) f_n(x) F(w) \, dx dw$ where we can take $F$ to be a fixed smooth function supported in $B_\delta(y)$ that is radially symmetric about $y$. Due to \eqref{Kk}, we see that this is equal to $K_2(F,f_n)=\E((h^{\B},F)(h^{\B},f_n))$, which is in turn bounded (using Cauchy--Schwarz) by the square root of $\E((h^{\B},F)^2)\E((h^\B,f_n)^2)$. Applying the boundary condition \ref{zbc}, we see that this indeed converges to $0$ as $n\to \infty$.}

Now, notice that by \eqref{eq:var_bounds}, for any $\delta > 0$ there are some constants $c_1, c_2 > 0$ such that $w\mapsto k_2(0,w) + c_1 s(|w|)+c_2$ is a positive harmonic function in $(1-\delta)\B \setminus \{0\}$. Thus by B\^ocher's theorem \cite[Chapter III]{axlerharm}, we conclude that {there is some harmonic function $\nu(w): (1-\delta)\B \to \R$ such that in $(1-\delta)\B \setminus \{0\}$ we can write $k_2(0,w) + c_1 s(|w|)+c_2  = c_3 s(|w|) + \nu(w)$.} In particular, there is some $b > 0$ such that $k_2(0,w) - bs(|w|)$ is harmonic and bounded in $(1-2\delta)\B\setminus\{0\}$ and thus can be extended to a harmonic function on $(1-2\delta)\B$. Note that $b$ must be positive, since $\int k_2(0,z)\rho_0^{|w|}(dz)=\E(h_{|w|}(0)^2)$ is positive and increasing to $\infty$ as $w \downarrow 0$ by the domain Markov property. 

From here, we return to a fixed $y\in \B$. We choose $\eps>0$ such that $d(y,\partial \B)>2\eps$ and write $h^\B|_{y+2\eps\B}=h_\B^{y+2\eps\B}+\varphi_\B^{y+2\eps\B}$. Then for $x\in y+\eps\B$ we have $k_2(x,y)=\E((h^\B,\eta_{x})(h^\B,\eta_{y}))$ where $\eta_x$ and $\eta_y$ are smooth functions with mass one, radially symmetric around $x,y$ and supported in small non-intersecting balls around $x$ and $y$ respectively. Using the decomposition and harmonicity of $\varphi_\B^{y+2\eps\B}$ we have $$k_2(x,y)=\E((h^\B_{y+2\eps\B},\eta_x)(h^\B_{y+2\eps\B},\eta_y))+\E(\varphi_\B^{y+2\eps\B}(x)\varphi_\B^{y+2\eps\B}(y))$$ 
where the second term on the right-hand side is bounded by Cauchy-Schwarz and \eqref{eq:var_bounds}. The first term is equal to $bs(|x-y|)+O(1)$ in some neighbourhood of $y$ %$aG^{y+2\eps \B}(x,y)$ 
by the definition of the distribution of $h^{y+2\eps\B}_\B$,  and the previous paragraph.

We have therefore shown that $x\mapsto k_2(x,y)$ satisfies the condition $\star$ and is therefore equal to a multiple (which must actually be $b$) of $G^\B(x,y)$. 
Since $y\in \B$ was arbitrary we are done.
\end{proof}

\begin{cor}\label{cor}
For any $f\in H^{-1}(\B)$, take a sequence of smooth functions $(f_n)_{n\ge 0}$ such that $G^\B(f-f_n,f-f_n)\to 0$. Then we may define $(h^\B,f)$ to be the $L^2$ limit of $(h^\B,f_n)$ as $n\to \infty$. The limit does not depend on the approximation.
\end{cor}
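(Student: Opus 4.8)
The goal is to extend the field $(h^\B, \cdot)$ from smooth test functions to all of $H^{-1}(\B)$ by continuity, using the fact that the covariance is exactly the Green's kernel. The plan is as follows.

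\paragraph{Strategy.} By \cref{lem:cov}, we know that $K_2(f,g) = c\, G^\B(f,g)$ for all $f,g \in C_c^\infty(\B)$, where $G^\B(f,g) := \iint G^\B(z_1,z_2)f(z_1)g(z_2)\,dz_1\,dz_2$ is precisely the inner product inducing the $H^{-1}(\B)$ norm (up to the constant $c$). The essential observation is that the map $f \mapsto (h^\B, f)$ is therefore an \emph{isometry} (up to the factor $\sqrt{c}$) from $(C_c^\infty(\B), \|\cdot\|_{H^{-1}})$ into $L^2(\Omega, \P)$: indeed, by linearity (assumption \ref{lin}) and the covariance identity,
\begin{equation*}
\E\big((h^\B, f) - (h^\B, g))^2\big) = K_2(f-g, f-g) = c\, G^\B(f-g, f-g) = c\, \|f-g\|_{H^{-1}(\B)}^2.
\end{equation*}
This is the key point: the $L^2(\P)$-distance between $(h^\B,f)$ and $(h^\B,g)$ is controlled exactly by the $H^{-1}$-distance between $f$ and $g$.

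\paragraph{Main steps.} First I would invoke \cref{lem:cov} to establish the isometry displayed above. Next, given $f \in H^{-1}(\B)$ and an approximating sequence $(f_n)$ of smooth functions with $G^\B(f - f_n, f - f_n) \to 0$, I would note that $(f_n)$ is Cauchy in $H^{-1}(\B)$; by the isometry, the random variables $((h^\B, f_n))_{n\ge 0}$ form a Cauchy sequence in $L^2(\Omega, \P)$. Since $L^2(\Omega,\P)$ is complete, this sequence converges to a limit, which we define to be $(h^\B, f)$. For well-definedness, suppose $(f_n)$ and $(g_n)$ are two such approximating sequences; then the interlaced sequence $f_1, g_1, f_2, g_2, \dots$ also converges to $f$ in $H^{-1}$, hence is Cauchy, hence its image under the isometry is Cauchy in $L^2(\P)$ and has a single limit, forcing the limits of $(h^\B,f_n)$ and $(h^\B,g_n)$ to agree. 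This gives both existence and independence from the approximation.

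\paragraph{Main obstacle.} The argument is essentially the standard abstract completion of an isometric embedding of a dense subspace into a complete space, so there is no serious conceptual difficulty. The only point requiring a little care is justifying that the covariance identity $K_2(f,g) = c\,G^\B(f,g)$ from \cref{lem:cov}, which was stated via the off-diagonal limit \eqref{Kk}, really does coincide with the standard $H^{-1}(\B)$ inner product — i.e. that $G^\B(f-f_n, f-f_n)$ controls $\E((h^\B,f) - (h^\B,f_n))^2)$ with no diagonal contribution. This was precisely addressed in the ``diagonal contribution is negligible'' paragraph preceding \cref{lem:cov}, so I would simply cite that discussion to close the small gap, and otherwise the proof reduces to the completeness of $L^2(\Omega,\P)$.
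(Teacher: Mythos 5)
Your proof is correct and is exactly the argument the paper leaves implicit: the corollary is stated without proof precisely because, once \cref{lem:cov} identifies $K_2$ as $c\,G^\B$, the map $f\mapsto (h^\B,f)$ is an isometry (up to $\sqrt{c}$) from $(C_c^\infty(\B),\|\cdot\|_{H^{-1}})$ into $L^2(\Omega,\P)$, and the extension follows from completeness of $L^2$ together with the interlacing argument for well-definedness. Your attention to the diagonal-contribution point is also the right citation to make this fully rigorous.
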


In what follows, we will therefore use the notation $(h^\B, f)$ for $f\in H^{-1}(\B)$ without further justification.

\section{Gaussianity}\label{sec:gauss}

It now remains to argue that the field is Gaussian. We do this via a decomposition of the field into a sequence of radial processes. We show using the domain Markov property that each of these processes is Gaussian, and that moreover, the whole sequence is jointly Gaussian.

First, we will see a bound for the 4-point function that is reminiscent of Wick's theorem: this will help us deduce continuity of our processes. In the second subsection, we do the basic case - the case of spherical averages. Finally, we extend this to a wider range of processes, obtained from so called spherical harmonics. The Gaussianity of the field $(h^\B, f)_{f \in C_c^\infty(\R^d)}$ is proved in this final subsection.

\subsection{Weak Gaussianity in terms of the 4-point function}

To prove continuity of our radial processes, we need the following bound that is implied by our assumption on existence of fourth moments. Notice that this can be seen as establishing a very weak form of Wick's theorem, i.e. getting us closer to Gaussianity. Recall that for $r>0$, $h_r(0)=\varphi_\B^{r \B}(0)$ is the spherical average at radius $r$ around the origin, and converges to $0$ in probability as $r\uparrow 1$.
\begin{lemma}\label{lem:4mtbnd} For $r\in (0,1)$ we have that 
\begin{equation}\label{4mom_av} \E(h_r(0)^4)=\iint_{\partial(r\B)^4} k_4(z_1,z_2,z_3,z_4) \prod_{i=1}^4 \rho_0^r(dz_i) \end{equation} 
where the right-hand side is well defined as the limit when $a\to 0$ of $\iint_{|z_i-z_j|>a \, \forall i,j} k_4(z_1,z_2,z_3,z_4) \prod \rho_0^r(dz_i).$ Moreover, for some constant $c(d)$ and some $\eta\in[0,1)$:
\begin{equation}\label{K4bound} |k_4(z_1,z_2,z_3,z_4)|\le c(d) \delta^{-\eta} g(z_1,z_2,z_3,z_4)\end{equation}  for all $\delta\in (0,1]$ and distinct $z_1,\dots, z_4 \in \partial (1-\delta)\B$, where $$g(z_1,z_2,z_3,z_4)= G^\B(z_1,z_2)G^\B(z_3,z_4)+G^\B(z_1,z_3)G^\B(z_2,z_4)+G^\B(z_1,z_4)G^\B(z_2,z_3)$$
{is the four-point function for the $d-$dimensional zero boundary GFF in $\B$.}
\end{lemma}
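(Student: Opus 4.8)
The plan is to establish the pointwise bound \eqref{K4bound} first, and then to deduce the identity \eqref{4mom_av} (together with the fact that its right-hand side is well defined as an off-diagonal limit) by the same ``negligible diagonal'' argument already used for the two-point function. For \eqref{K4bound} I would first invoke the scale-independence of $k_4$ noted above to reduce to spherical averages: writing $V_i := h_a(z_i) = \varphi_\B^{z_i+a\B}(z_i)$ for a common radius $a$, one has $k_4 = \E(V_1V_2V_3V_4)$, and this value is unchanged as long as $a \lesssim \frac14\min_{i\ne j}|z_i-z_j|$. Abbreviating $G_{ij} := G^\B(z_i,z_j)$, the task is then to extract the pairing structure of $g = G_{12}G_{34}+G_{13}G_{24}+G_{14}G_{23}$ from the domain Markov property, which I would organise according to the geometry of the four points on $\partial((1-\delta)\B)$. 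When all four points lie in a common small ball (comparable mutual distances, small diameter), the crude fourth-moment bound \eqref{eq:moment_bounds} together with Hölder already yields $|k_4| \lesssim C(\delta)\,s(\min_{i\ne j}|z_i-z_j|)^2$, which is of the order of $g$ up to a power of $\delta^{-1}$.

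The heart of the argument is the remaining, ``clustered'' case, where the points split across a gap into two groups; here I would separate the groups using the domain Markov property. Choosing a ball $B^*$ that contains one cluster, say $\{z_3,z_4\}$, and stays far from the other, the decomposition $h^\B = h_\B^{B^*}+\varphi_\B^{B^*}$ writes each $V_k$ ($k=3,4$) as $V_k = \psi_k + U_k$, where $\psi_k := \varphi_\B^{B^*}(z_k)$ is $\sigma(\varphi_\B^{B^*})$-measurable and $U_k$ is an inner spherical average of $h_\B^{B^*}$, hence independent of $\sigma(\varphi_\B^{B^*})$ and mean zero; meanwhile $V_1,V_2$ are $\sigma(\varphi_\B^{B^*})$-measurable because $z_1+a\B,\,z_2+a\B$ lie outside $B^*$. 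Expanding $\E(V_1V_2(\psi_3+U_3)(\psi_4+U_4))$ and using independence and $\E(U_k)=0$ kills the cross terms, leaving $\E(V_1V_2)\,\E(U_3U_4) + \E(V_1V_2\psi_3\psi_4)$. The first term equals $k_2(z_1,z_2)\,\E(U_3U_4)$, which by \cref{lem:cov} and the orthogonal splitting $k_2(z_3,z_4) = \E(\psi_3\psi_4)+\E(U_3U_4)$ is comparable to $G_{12}G_{34}$; the second term is controlled by Cauchy--Schwarz together with the fourth-moment bounds on $V_1,V_2$ and on the coarse-scale harmonic values $\psi_3,\psi_4$, and is again $\lesssim G_{12}G_{34}$. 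Recursing on the finer cluster structure (equivalently, enumerating the multi-scale configurations of four points) produces the three pairings constituting $g$.

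Two features require care, and the second is the main obstacle. Throughout, the increment computation and the spherical-average variances naturally produce the whole-space kernel $s(|z_i-z_j|)$, whereas $g$ is written in terms of the zero boundary Green's function $G_{ij}$; for points on $\partial((1-\delta)\B)$ these differ by boundary corrections, and tracking them, together with the $\delta$-dependence of the constant $C(\delta)$ in \eqref{eq:moment_bounds}, is precisely what forces the prefactor $\delta^{-\eta}$. The genuinely important point is that this loss stays strictly subpolynomial, i.e.\ $\eta<1$, which is what the downstream use of the lemma (in particular the vanishing of $\E(h_r(0)^4)$ as $r\uparrow1$) will demand. Proving $\eta<1$ \emph{uniformly over all configurations} of four distinct points on the sphere, controlling in particular the delicate regime where an intra-cluster distance is comparable to $\delta$, is the step I expect to be hardest.

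Finally, granted \eqref{K4bound}, I would prove \eqref{4mom_av} exactly along the lines of the two-point identity \eqref{Kk}. Approximating $h_r(0)=\varphi_\B^{r\B}(0)$ by the mollified fields $(h^\B,\rho_n^r)$ supported in thin annuli shrinking to $\partial(r\B)$, the Markov decomposition gives $(h^\B,\rho_n^r) = h_r(0) + (h_\B^{r\B},\rho_n^r)$ with the remainder tending to $0$ (in $L^4$, using the boundary fourth-moment control furnished by \eqref{K4bound}), so $\E(h_r(0)^4) = \lim_n \E((h^\B,\rho_n^r)^4)$. For each $n$, Fubini expands $\E((h^\B,\rho_n^r)^4)$ as a fourfold integral of the four-point function; splitting into the region where all $|z_i-z_j|$ exceed $a$ and its complement, the off-diagonal part equals $\iiiint k_4 \prod_i \rho_n^r(dz_i)$, while the near-diagonal part is shown to be negligible using \eqref{K4bound} and the fact that $\rho_n^r$ is a probability measure concentrated on a thin set, precisely as in the negligible-diagonal argument for $k_2$. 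Letting $n\to\infty$ so that $\rho_n^r$ converges weakly to the uniform measure $\rho_0^r$ on $\partial(r\B)$, and using that $k_4$ is continuous (indeed harmonic in each variable, by the domain Markov property, exactly as for $k_2$) away from the diagonal, then yields \eqref{4mom_av} with its right-hand side read as the stated $a\to0$ limit.
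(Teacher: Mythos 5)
Your handling of \eqref{4mom_av} is essentially sound and close in spirit to the paper's (the paper discretises the sphere into disjoint small balls carrying mollifiers and uses dominated convergence via the crude integrated bound \eqref{eq:k4sphere_crude}, rather than annular approximations, but both are ``negligible diagonal'' arguments). The genuine gap is in your proof of \eqref{K4bound}, and it occurs at the very first step of your clustered case: the ball $B^*$ you need does not exist. All four points lie on $\partial((1-\delta)\B)$, and the domain Markov property has only been assumed for balls \emph{contained in} $\B$. If $B^*=c+\rho\B\subseteq\B$ contains $z_3$ and $z_4$ with $|z_3-z_4|=L$, then combining $|z_i-c|<\rho$ ($i=3,4$), $|c|\le 1-\rho$ and $|z_i|=1-\delta$ forces $|c|\le 8\delta/L^2$ and $\rho\ge 1-\delta-|c|$, whence \emph{every} point of $\partial((1-\delta)\B)$ lies within distance $2|c|\le 16\delta/L^2$ of $B^*$. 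So, for instance, if both clusters have diameter and separation of order one, any admissible $B^*$ comes within $O(\delta)$ of $z_1$ and $z_2$: ``a ball containing one cluster and staying far from the other'' is geometrically impossible precisely in the regime that matters. This is exactly the obstruction the paper's proof is engineered around: it replaces balls by the non-ball regions $B_j=(z_j+a_j\B)\cap\B$, tests the field against harmonic measure $\tilde\rho_j$ on $\partial B_j\setminus\partial\B$, and proves the substitute identity \eqref{eq:k4rewrite} (the analogue of your expansion) by embedding the configuration in $2\B$ — where the genuine balls $z_j+a_j\B$ do fit — and transferring back through $h^{2\B}=h^\B_{2\B}+\varphi^\B_{2\B}$, using the already-identified covariance $bG^\B$ to check the cancellations.

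Second, even where a separating ball exists, your quantitative claims are not provable by the tools you invoke. For points on $\partial((1-\delta)\B)$ with $|z_i-z_j|\ge\delta$ the explicit formula gives $G^\B(z_i,z_j)$ of order $\delta^2|z_i-z_j|^{-d}$, so $g$ is of order $\delta^4$ for well-separated configurations; meanwhile the fourth moments of your $V_i$ and $\psi_i$ carry no factor of $\delta$ at all (the available bounds are of order $s(a)^2$). Hence estimating $\E(V_1V_2\psi_3\psi_4)$ by Cauchy--Schwarz and fourth moments cannot yield anything $\lesssim G_{12}G_{34}$: Cauchy--Schwarz is blind to the near-independence that makes $G_{12}\approx\delta^2$ small, and what it produces is off by a factor of order $\delta^{-4}$. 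The mechanism that supplies the missing boundary decay in the paper is the harmonic-measure density estimate $\sup\|d\tilde\rho_j/d\rho_j\|\lesssim(\delta/a_j)\wedge 1$ (Burdzy's theorem), giving $\E((h^\B,\tilde\rho_j)^4)^{1/4}\lesssim(\frac{\delta}{a_j}\wedge 1)\sqrt{s(a_j)}$; this per-point factor of $\delta$ is what matches the $\delta^2$ per Green's function in $g$. You have no substitute for it, and you explicitly defer the uniform bound $\eta<1$ as ``the step I expect to be hardest'' — but that uniform $\eta<1$ \emph{is} the content of \eqref{K4bound}, since the downstream Kolmogorov argument needs $\E(X_\delta^4)\lesssim\delta^{2-\eta}$ with $2-\eta>1$. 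Finally, your ``recursion on the finer cluster structure'' does not address the one-far/three-close configuration ($a_2\le a_1/10$ in the paper's notation): there even the harmonic-measure bound is insufficient, because of cancellations in the third moment, and the paper needs the additional identity \eqref{eq:k4rewrite2}, which subtracts harmonic measures to expose those cancellations and reduces the offending terms to products of two-point functions computable from $bG^\B$. A recursion without a gain of $\delta$ at each level simply reproduces a four-point function of the same type at a coarser scale and does not terminate with the required bound.
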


Before proving the lemma, let us show that for any {$0<r<1-\eta$}, %our upper bound \eqref{eq:moment_bounds} gives that, 
and for some constant $C(\eta)$ depending only on $\eta$,
\begin{equation}\label{eq:k4sphere_crude}
\limsup_{a>0}\iint_{\partial(r\B)^4\setminus\{A_a\}} |k_4(z_1,z_2,z_3,z_4)| \prod_{i=1}^4 \rho_0^r(dz_i) \le C(\eta) s(r)^2,
\end{equation}
where $A_a=\{z_1,z_2,z_3,z_4\in \B^4: d(z_i,z_j)\le a \text{ for some } i\ne j\}$.
 Indeed, by symmetry it suffices to bound the integral over the region where $\min(|z_i-z_j|)=|z_1-z_2|$. However, on this region, by \eqref{eq:moment_bounds}, we can bound $|k_4|$ above by an $\eta$-dependent constant times
$$ (1+s(|z_1-z_2|))\sum_{i\ne 3} \sqrt{1+s(|z_i-z_3|)} \sum_{j\ne 4} \sqrt{1+s(|z_j-z_4|)}.$$ Then expanding the product of the two sums, we can show the desired bound for the integral of each term separately (uniformly in $a$), using Cauchy--Schwarz in the integral over $z_3$ and $z_4$ for all terms, except $s(|z_3-z_4|)s(|z_1-z_4|)$ which can be integrated directly. Here we are using the fact that for any fixed $z_1\in \partial (r\B)$, we have that $\int_{\partial (r\B)} s(|z_1-z|){\rho_0^r(dz)}=O(s(r))$. 
In particular, applying dominated convergence, %combined with the same argument proving \eqref{Kk} and \eqref{eq:Kklimit}, 
this shows that the integral in \eqref{4mom_av} is well-defined and finite.

The expression for the 4-point function now also follows from dominated convergence:

\begin{proof}[Proof of \eqref{4mom_av}] 
Fix $r\in (0,1)$ and for each $n\ge 1$ partition the sphere $\partial((r-2^{-n})\B)$ into regions each having diameter no larger than $2^{-n}$. For each of these regions we can then choose a ball of maximal radius centred at a point in the region, so that the ball intersected with the sphere lies inside the region, but within distance $2^{-2n}$ from the boundary of the region. This produces a sequence $\{z_{i,n}\}_{1\le i\le m_n}$  and radii $
\{r_{i,n}\}_{1\le i \le m_n}$ (all less than $2^{-(n+1)}$) such that the balls $z_{i,n}+r_{i,n}\B$ do not intersect. 

We can now set $\nu_{i,n}$ for each $i,n$ to be a smooth mollifier supported on $z_{i,n}+r_{i,n}\B$, that is radially symmetric about $z_{i,n}$ and has total mass one. Using that $k_2=bG^\B$, we see that $m_n^{-1}\sum_{i=1}^n (h^\B,\nu_{i,n})\to h_r(0)$ in $L^2$ as $n\to \infty$. On the other hand, the definition of $k_4$ gives that $k_4(z_{i,n},z_{j,n},z_{k,n},z_{l,n})=\E((h^\B,\nu_{i,n})(h^\B,\nu_{j,n})(h^\B,\nu_{k,n})(h^\B,\nu_{l,n}))$ for any distinct $1\le i,j,k,l\le m_n$ and every $n$. Dominated convergence using \eqref{eq:k4sphere_crude} then allows us to conclude. 
\end{proof}

The Wick-type of bound is slightly trickier:

\begin{proof}[Proof of \eqref{K4bound}]  Fix $z_1,z_2,z_3,z_4$ distinct and for each $j$ write $a_j=\min_{i\ne j} d(z_i,z_j)/2$. 

Denote $B_j:=(z_j+a_j\B)\cap \B$ for each $j$ so that the $B_j$ do not intersect. Write $\rho_j$ for uniform measure on $\partial(z_j+a_j\B)$ and $\tilde{\rho}_j$ for harmonic measure on $\partial B_j\setminus \partial \B$, seen from $z_j$. 

\begin{claim}\label{claim:4mt} We can write
\begin{equation}\label{eq:k4rewrite}k_4(z_1,z_2,z_3,z_4)=\E((h^\B,\tilde{\rho}_1)(h^\B,\tilde{\rho}_2)(h^\B,\tilde{\rho}_3)(h^\B,\tilde{\rho}_4)).\end{equation} 
\end{claim}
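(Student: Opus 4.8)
The plan is to reduce the claim to the harmonicity and boundary behaviour of the four-point function $k_4$, which can be established exactly as was done for $k_2$ in the proof of \cref{lem:cov}. Write $I_j:=\partial B_j\setminus\partial\B$ for the inner boundary carrying $\tilde\rho_j$, and $O_j:=\partial B_j\cap\partial\B$ for the outer piece. First I would unfold the left-hand side: each $(h^\B,\tilde\rho_j)$ is $h^\B$ integrated against a surface-type measure supported on $I_j$, which is meaningful as an $L^2$ limit of pairings against rotationally symmetric mollifiers, just as spherical averages have been handled throughout the paper. Approximating each $\tilde\rho_j$ in this way and invoking the moment bound \eqref{eq:moment_bounds} together with dominated convergence — precisely the Fubini-type argument already carried out in the proof of \eqref{4mom_av} — gives
\[\E\Big(\prod_{j=1}^4 (h^\B,\tilde\rho_j)\Big)=\int_{I_1\times\cdots\times I_4} k_4(w_1,w_2,w_3,w_4)\,\prod_{j=1}^4\tilde\rho_j(dw_j).\]
The integral is finite because the spheres $I_j$ are pairwise separated apart from possible tangencies, where the singularity of $k_4$ is integrable, so the reasoning behind \eqref{eq:k4sphere_crude} applies directly.

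The heart of the matter is to integrate out $w_1,\dots,w_4$ one at a time. For this I would record that, for fixed distinct $y_2,y_3,y_4$, the map $w\mapsto k_4(w,y_2,y_3,y_4)$ is harmonic on $\B\setminus\{y_2,y_3,y_4\}$ and satisfies the zero boundary condition on $\partial\B$. Both facts are proved verbatim as for $k_2$ in \cref{lem:cov}: harmonicity follows from the mean value property \eqref{eqn:mvpk2}, which comes from the harmonicity of the field in the domain Markov decomposition and holds identically with the fixed factors $\varphi_\B^{z_i+a_i\B}(z_i)$, $i\ge 2$, present; and the vanishing at $\partial\B$ follows from assumption \ref{zbc} through Cauchy--Schwarz and the moment bounds. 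Moreover $B_1=(z_1+a_1\B)\cap\B$ avoids all of $I_2,I_3,I_4$: for $w_i\in I_i$ the choice $a_j=\tfrac12\min_{k\ne j}|z_j-z_k|$ gives $a_i\le\tfrac12|z_1-z_i|$, whence $|w_i-z_1|\ge|z_1-z_i|-a_i\ge\tfrac12|z_1-z_i|\ge a_1$, so $w_i\notin B_1$. Hence $k_4(\cdot,w_2,w_3,w_4)$ is harmonic throughout the open ball $B_1$ for every choice of $w_i\in I_i$.

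Now I would apply the harmonic-measure representation in $B_1$: since $k_4(\cdot,w_2,w_3,w_4)$ is harmonic in $B_1$ and vanishes on the outer boundary $O_1$, its value at the centre $z_1$ equals the integral of its boundary data against harmonic measure from $z_1$, and the $O_1$ contribution drops out, leaving $\int_{I_1}k_4(w_1,w_2,w_3,w_4)\,\tilde\rho_1(dw_1)=k_4(z_1,w_2,w_3,w_4)$. Iterating this in $w_2,w_3,w_4$ — using that the harmonicity and boundary statement is symmetric in each slot, and that $B_j$ avoids the remaining spheres by the same estimate — collapses the fourfold integral to $k_4(z_1,z_2,z_3,z_4)$, which is exactly \eqref{eq:k4rewrite}.

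I expect the main obstacle to be the harmonic-measure step. The zero boundary condition \ref{zbc} only furnishes the vanishing of $k_4$ at $\partial\B$ in a weak, averaged sense, rather than as a classical continuous boundary value, so justifying that the outer boundary $O_j$ contributes nothing requires transporting this weak statement through the mean value property, exactly along the lines of the characterisation $\star$ and the boundary check for $k_y$ in the proof of \cref{lem:cov}. The remaining, more routine, ingredient is the integrability and separation bookkeeping near the tangency points of the spheres $I_j$, needed to legitimise the Fubini exchange and the dominated-convergence passage in the first step.
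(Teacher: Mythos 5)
Your reduction of the claim to potential theory for $k_4$ --- the integral formula for $\E\bigl(\prod_j(h^\B,\tilde\rho_j)\bigr)$, and the slotwise harmonicity of $k_4$ via the mean value property (both plausible, and provable via the multi-ball domain Markov property \eqref{eq:multidmp}, just as for $k_2$) --- is sound. The genuine gap is the harmonic-measure step in $B_1$. You need $k_4(\cdot,w_2,w_3,w_4)$ to vanish on the cap $O_1=\partial B_1\cap\partial\B$ in a sense strong enough that its contribution to the Poisson representation in $B_1$ is zero, but assumption \ref{zbc} cannot supply this: the ratio condition $\sup_{x,y\in\partial(r\B)}|f_n(x)/f_n(y)|<\infty$ forces every admissible test function to be spread over entire spheres $\partial(r\B)$, so the weak boundary statement $(k_4(\cdot,w_2,w_3,w_4),f_n)\to 0$ only ever controls full-sphere averages near $\partial\B$ and can never be localised to the cap $O_1$. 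This is also why your appeal to ``the lines of the characterisation $\star$'' does not transfer: the proof of $\star$ works because $k_y$ is harmonic in the \emph{whole} punctured ball, so full-sphere (Poisson-kernel weighted) test functions can be converted into point evaluations; in the lens-shaped domain $B_1$ there is no such mechanism. And without genuine control on $O_1$ the desired conclusion is simply false for general harmonic functions: $w\mapsto\omega_w(O_1)$, the harmonic measure of the cap seen from $w$, is harmonic in $B_1$, bounded, vanishes continuously on the interior of $I_1$, yet is not identically zero. Note moreover that the only available bound on $k_4$, \eqref{eq:moment_bounds}, has a constant $C(\delta)$ that blows up as the points approach $\partial\B$, so you do not even get a uniform bound near $O_1$ for free.

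A repair along your lines would have to identify the function globally before localising: apply B\^ocher's theorem at each of the three singularities $w_2,w_3,w_4$ and then run a $\star$-type uniqueness argument in $\B\setminus\{w_2,w_3,w_4\}$ to conclude that $k_4(\cdot,w_2,w_3,w_4)=\sum_j c_j G^\B(\cdot,w_j)$; since $G^\B$ genuinely vanishes continuously on $\partial\B$, your harmonic-measure identity in $B_1$ would then follow at once. But this amounts to re-proving \cref{lem:cov} for the four-point function --- a considerably longer detour, and a different argument from the local one you propose. The paper's proof avoids all boundary behaviour of $k_4$ altogether: it embeds the configuration in $2\B$, where the $z_j+a_j\B$ are honest balls compactly contained in the domain (so the domain Markov property, assumed only for balls, applies), and then transfers the resulting identity to $h^\B$ through the decomposition $h^{2\B}=h^\B_{2\B}+\varphi^\B_{2\B}$, using only the harmonicity of $\varphi^\B_{2\B}$ inside $\B$, zero means, independence, and the already-established fact $k_2=bG^\B$ to cancel the cross terms.
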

\noindent This claim could be checked straightforwardly if we could apply the domain Markov property inside the regions $B_1$ to $B_4$, so it should not be too surprising. However, since the Markov property has only been assumed for balls, we will have to do a little bit of work to show this. This will be carried out shortly, but let us first see how \eqref{K4bound} follows. 

First, notice that using the domain Markov property we can write $$\E((h^{2\B},\tilde\rho_j)^4)=\E((\varphi^{\B}_{2\B},\tilde{\rho}_j)^4)+6\E((\varphi^{\B}_{2\B},\tilde{\rho}_j)^2)\E((h^\B_{2\B},\tilde{\rho}_j)^2)+\E((h^\B_{2\B},\tilde{\rho}_j)^4)$$ and hence bound $\E((h^\B,\tilde{\rho}_j)^4)\le \E((h^{2\B},\tilde{\rho}_j)^4)$ for each $j$ . By the same dominated convergence argument as for \eqref{K4bound}, and writing $k_4^{2\B}$ for the four-point function of $h^{2\B}$, we see that the latter is equal to $\iint_{\partial (z_j+a_j\B)} |k_4^{2\B}(x,y,z,w)| \tilde{\rho_1}(dx)\dots \tilde{\rho_4}(dw)$. But then by \eqref{eq:k4sphere_crude} and using that each $z_j+a_j\B$ is far from the boundary of $2\B$ we see that this is less than some constant, not depending on $\delta$, times
\begin{equation}\label{density_sup}\sup_{\partial B_j\setminus \partial \B} \|\frac{d\tilde{\rho}_j}{d\rho_j}\| \; s(a_j)^2.
\end{equation}In the case that $a_j>\delta$, $\sup_{\partial B_j\setminus \partial \B} \|d\tilde{\rho}_j/d\rho_j\|$ can be bounded above by the probability that $d$-dimensional Brownian motion on the half space $\{(x_1,\dots, x_d): x_1>0\}$, started from $(\delta/a_j)$ reaches the boundary of the unit sphere before hitting the hyperplane $\{x_1=0\}$. This is bounded above by a constant times $(\delta/a_j)$  \cite[Theorem 4.4]{Burdzy}. When $\delta>a_j$ we have {$\sup_{\partial B_j\setminus \partial \B} \|d\tilde{\rho}_j/d\rho_j\|=1$}. So overall, we obtain the bound (where from now on $a\lesssim b$ means $a\le C b$ for some constant $C$  depending on the dimension):
$$ \E((h^\B,\tilde{\rho}_j)^4)^{1/4}\lesssim (\frac{\delta}{a_j}\wedge 1)\sqrt{s(a_j)}. $$
Thus applying Cauchy--Schwarz to \eqref{eq:k4rewrite} we see that 
$$k_4(z_1,z_2,z_3,z_4)\lesssim \prod_{j=1}^4 (\frac{\delta}{a_j}\wedge 1) \sqrt{s(a_j)}.$$ 

On the other hand, we can lower bound $g(z_1,z_2,z_3,z_4)$ using the explicit expression $$G^\B(x,y)=s(|x-y|)-s(|x||y-\tilde{x}|) \quad ; \quad  \tilde{x}=|x|^{-2}x.$$ 

In particular, when $|x|=|y|=1-\delta=r$, we have that $|x|^2|y-\tilde{x}|^2|x-y|^{-2}=(1+|x-y|^{-2}\delta^2(2-\delta)^2)$. This implies that, on the region $|x-y|>\delta$, we have $ G^\B(x,y)\gtrsim \delta^2|x-y|^{-2}$  for $d=2$ and $G^\B(x,y) \gtrsim |x-y|^{2-d}\delta^2|x-y|^{-2}$ for $d\ge 3$. On the region $|x-y|<\delta$ we have $G^\B(x,y)\gtrsim s(|x-y|)$. 

Without loss of generality we can now assume that $a_1\ge a_2\ge a_3\ge a_4$. 
Combining the above lower bounds for $G^\B$ and the definition of $g$ with the upper bounds for $k_4$, we obtain \eqref{K4bound} under the condition that $a_2>a_1/10$. Note that the $\delta^{-\eta}$ correction is only actually needed when the dimension $d=2$. 

When $a_2 \leq a_1/10$, i.e. one point is considerably further than the rest, our bounds do not suffice - this is because we haven't taken properly care of cancellations occurring in the {third moment, when three points are together}. Let us do that now. 
Notice that when $a_2 \leq a_1/10$ we have $B_2,B_3,B_4\subset z_2+a_1\B$; we write $\hat\rho_i$ for harmonic measure seen from $z_2,z_3,z_4$ on $\partial(z_2+a_1\B)\setminus \partial\B$. We need an extension of \cref{claim:4mt}, that first separates the three points, and looks at the occurring cancellations:

\begin{claim}
In the case that $a_2<a_1/10$ we can further write $k_4(z_1,z_2,z_3,z_4)$ as 
\begin{equation}\label{eq:k4rewrite2}\E((h^\B,\tilde{\rho}_1)\prod_{i\ne 1} (h^\B,\hat\rho_i))+\sum_{\sigma\in S({2,3,4})} \E((h^\B,\tilde{\rho}_1)(h^\B,\hat\rho_{\sigma(2)}))\E((h^\B,\hat\rho_{\sigma(3)}-\tilde{\rho}_{\sigma(3)})(h^\B,\hat\rho_{\sigma(4)}-\tilde{\rho}_{\sigma(4)})).\end{equation}
\end{claim}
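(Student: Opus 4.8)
The plan is to prove \eqref{eq:k4rewrite2} by inserting an intermediate scale between the far point $z_1$ and the cluster $\{z_2,z_3,z_4\}$, namely the ball $U:=z_2+a_1\B$. Since $a_2<a_1/10$, the small balls $B_2,B_3,B_4$ all lie inside $U$ (so that $z_2,z_3,z_4\in U$), while $z_1+a_1\B$ is disjoint from $U$ because $d(z_1,z_2)\ge 2a_1$. First I would apply the domain Markov property in $U$, writing $h^\B=h_\B^U+\varphi_\B^U$ with $\varphi_\B^U$ harmonic in $U$ and independent of $h_\B^U$, clipping $U$ to $U\cap\B$ and making sense of the boundary integrals exactly as in the treatment of clipped balls for \cref{claim:4mt}. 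By the same harmonic-measure identity used throughout, $(h^\B,\hat\rho_i)=\varphi_\B^U(z_i)$ for $i\in\{2,3,4\}$, so each of these three quantities is a function of $\varphi_\B^U$ alone.

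Next I would identify the differences $Y_i:=(h^\B,\hat\rho_i-\tilde\rho_i)$. Since $z_i+a_i\B\subset U$, the uniqueness of the domain Markov decomposition together with \eqref{eq:it_dmp} gives that $\varphi_\B^{z_i+a_i\B}-\varphi_\B^U$ is precisely the domain Markov harmonic part, in $z_i+a_i\B$, of the \emph{inner} field $h_\B^U$, and is independent of $\varphi_\B^U$. Evaluating at $z_i$ and using $(h^\B,\tilde\rho_i)=\varphi_\B^{z_i+a_i\B}(z_i)$, $(h^\B,\hat\rho_i)=\varphi_\B^U(z_i)$, we obtain that $Y_i=\varphi_\B^U(z_i)-\varphi_\B^{z_i+a_i\B}(z_i)$ is a function of $h_\B^U$ only. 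Finally, applying the multi-ball domain Markov property \eqref{eq:multidmp} to the two disjoint balls $z_1+a_1\B$ and $U$ shows that $(h^\B,\tilde\rho_1)=\varphi_\B^{z_1+a_1\B}(z_1)$ is independent of $h_\B^U$ (here one uses that $h_\B^U$ vanishes at $z_1$, since $z_1\notin U$). Consequently the whole ``outer'' family $\{(h^\B,\tilde\rho_1),(h^\B,\hat\rho_2),(h^\B,\hat\rho_3),(h^\B,\hat\rho_4)\}$, being a function of the harmonic/outer part, is jointly independent of the ``inner'' family $\{Y_2,Y_3,Y_4\}$.

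With this in hand the identity becomes pure bookkeeping. Starting from \cref{claim:4mt}, I would write $(h^\B,\tilde\rho_i)=(h^\B,\hat\rho_i)-Y_i$ for $i\in\{2,3,4\}$ and expand
$$k_4(z_1,z_2,z_3,z_4)=\E\Big((h^\B,\tilde\rho_1)\prod_{i\in\{2,3,4\}}\big((h^\B,\hat\rho_i)-Y_i\big)\Big).$$
For each subset $S\subseteq\{2,3,4\}$ of chosen $Y$-factors, independence of the two families lets the corresponding term factorise as $(-1)^{|S|}\,\E\big((h^\B,\tilde\rho_1)\prod_{i\notin S}(h^\B,\hat\rho_i)\big)\,\E\big(\prod_{i\in S}Y_i\big)$. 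The terms with $|S|=1$ vanish because $\E(Y_i)=0$, the field being centred; and --- this is exactly the cancellation the authors flag --- the term with $|S|=3$ vanishes because its outer factor is $\E((h^\B,\tilde\rho_1))=0$, which is what kills the troublesome triple correlation $\E(Y_2Y_3Y_4)$. What survives is the $|S|=0$ term $\E((h^\B,\tilde\rho_1)\prod_{i\ne1}(h^\B,\hat\rho_i))$ and the three $|S|=2$ terms, which, grouped according to which of $z_2,z_3,z_4$ is paired with $z_1$, reproduce the pairing sum in \eqref{eq:k4rewrite2}.

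The genuinely delicate step is not the expansion but making the domain Markov manipulations rigorous for the clipped ball $U\cap\B$: one must justify $(h^\B,\hat\rho_i)=\varphi_\B^U(z_i)$ and the independence of the inner and outer fields when $U$ meets $\partial\B$, which I would handle by the same mollifier/approximation scheme used for \cref{claim:4mt} (approximating harmonic measure on $\partial U\setminus\partial\B$ by unit-mass radially symmetric test functions and invoking the zero boundary condition \ref{zbc}). The centring facts $\E(Y_i)=\E((h^\B,\tilde\rho_1))=0$ are immediate, since each is an $L^2$-limit of linear functionals of the centred field.
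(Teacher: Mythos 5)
Your expansion is the right combinatorial skeleton, and it isolates exactly the cancellation the paper is after: the $|S|=1$ terms vanish because the inner increments are centred, and the problematic triple correlation $\E(Y_2Y_3Y_4)$ is annihilated by the factor $\E((h^\B,\tilde\rho_1))=0$. The gap is in what powers the factorisation. Every step of your argument leans on genuine independence statements for functionals of $h^\B$: you apply the domain Markov property to $h^\B$ in $U=z_2+a_1\B$, invoke \eqref{eq:it_dmp} for $z_i+a_i\B\subset U$, and conclude that the outer family $\{(h^\B,\tilde\rho_1),(h^\B,\hat\rho_2),(h^\B,\hat\rho_3),(h^\B,\hat\rho_4)\}$ is independent of the inner family $\{Y_2,Y_3,Y_4\}$. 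But assumption \ref{dmp} provides the Markov decomposition only for balls contained in $\B$, and \eqref{eq:it_dmp} likewise requires $a+r\B\subset a'+r'\B\subset\B$. In the regime of \eqref{K4bound} the points lie on $\partial((1-\delta)\B)$ with $\delta$ arbitrarily small while $a_1$ may be of order one, so $U$ --- and in general even the balls $z_i+a_i\B$ themselves --- stick out of $\B$. This is precisely why the paper introduces the clipped regions $B_j=(z_j+a_j\B)\cap\B$, and why its argument never applies a Markov decomposition of $h^\B$ in such a region: none is available from the hypotheses.

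Your proposed repair (``the same mollifier/approximation scheme used for \cref{claim:4mt}'', plus the zero boundary condition \ref{zbc}) misreads what that proof actually does. The paper's proof of \cref{claim:4mt} never constructs a Markov decomposition for a clipped region by approximation; it sidesteps the issue by passing to $h^{2\B}$, where the unclipped balls do fit, applying \eqref{eq:multidmp} there, and then transferring the resulting \emph{moment identity} back to $h^\B$ through the decomposition $h^{2\B}=h^\B_{2\B}+\varphi^\B_{2\B}$, using only centredness, harmonicity of $\varphi^\B_{2\B}$ inside $\B$, and the already-identified covariance $bG^\B$ --- at no point is any independence asserted for functionals of $h^\B$ itself. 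Nor can an approximation argument close your gap in principle: assumption \ref{zbc} yields $L^2$/covariance statements, and uncorrelated functionals cannot be promoted to independent ones here, since Gaussianity is exactly what this section is trying to prove. The correct fix is to run your expansion for $h^{2\B}$, where the two-ball Markov decomposition in $z_1+a_1\B$ and $z_2+a_1\B$ (both contained in $2\B$) makes your inner/outer independence genuine, and then perform the transfer-and-cancellation step; this is exactly the proof the paper sketches when it says to use ``the same argument, using the domain Markov decomposition of $h^{2\B}$ inside $z_1+a_1\B$ and $z_2+a_1\B$''. One further remark: your derivation produces each pairing once, whereas the literal sum over the six elements of $S(\{2,3,4\})$ in \eqref{eq:k4rewrite2} counts each pairing twice, the second factor being symmetric in $\sigma(3),\sigma(4)$; the discrepancy is harmless for the intended application (the upper bound \eqref{K4bound}), but your version is what the expansion actually gives.
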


Again, we postpone the proof of the claim and first see how it implies \eqref{K4bound}. 
From the same Cauchy--Schwarz argument and bounds as in $a_2>a_1/10$ case (noting also that $d(z_j,\partial (z_2+a_1\B))>a_1/2$ for $j=3,4$), we can deduce that the first term in \eqref{eq:k4rewrite2} is $\lesssim \delta^{-\eta} g(z_1,z_2,z_3,z_4)$ for some $\eta\in [0,1)$. Again the $\delta^{-\eta}$ correction is only needed when $d=2$.

To deal with the latter terms in \eqref{eq:k4rewrite2}, we use the fact that the covariance of $h^\B$ is equal to $bG^\B$. Indeed, using the explicit expression for $G^\B$ we see that the latter term is given
by $$4b^2\sum_{\sigma\in S({2,3,4})}G^\B(z_1,z_{\sigma(2)})G^{(z_2+a_1\B)\cap \B}(z_{\sigma(3)},z_{\sigma(4)}),$$
and since $G^{D'}\le G^D$ for $D'\subset D$, \eqref{K4bound} then follows in the case $a_2 \leq a_1/10$ too. \medskip

{It now remains to prove the claims.}

\begin{proof}[Proof of Claims:]
We start with \eqref{eq:k4rewrite}. As mentioned above, we cannot apply the Markov property directly inside the regions $B_j$. Instead we will work in $2\B$ and use the equivalent of \eqref{eq:k4rewrite} for the field $h^{2\B}$, together with the Markov decomposition $h^{2\B}=h^\B_{2\B}+\varphi^\B_{2\B}$. In this decomposition, the two summands are independent, $h^{\B}_{2\B}$ has the law of $h^\B$ and $\varphi_{2\B}^\B$ is harmonic inside $\B$.

For each $i = 1 \dots 4$, let $\eta_i$ be a smooth function supported in $B_i$, rotationally symmetric about $z_i$, such that $k_4(z_1,z_2,z_3,z_4)=\E(\prod_i (h^\B,\eta_i))$. Let us also denote by $\rho_i'$ the  harmonic measure on $\partial B_i$ (now including the part on $\partial \B$) seen from $z_i$

By the Markov decomposition for the field $h^{2\B}$ inside $\{z_1+a_1\B,\dots, z_4+a_4\B\}$, as in \eqref{eq:multidmp} with $n=4$, we have that {$\E(\prod_{i=1}^4 (h^{2\B},\eta_i)) = \E(\prod_{i=1}^4 (h^{2\B},\rho_i')).$} We use the domain Markov decomposition $h^{2\B}=h^\B_{2\B}+\varphi^\B_{2\B}$ to rewrite this as
$$\E(\prod_{i=1}^4 (h^\B_{2\B}+\varphi^\B_{2\B},\eta_i)) = \E(\prod_{i=1}^4 (h^\B_{2\B}+\varphi^\B_{2\B},\rho_i')).$$
Opening the brackets and using the independence of $h^\B_{2\B}$ and $\varphi^\B_{2\B}$ we can write this further as
$$\sum_{S \cup S' = \{1,2,3,4\}} \E(\prod_{i\in S} (h^\B_{2\B},\eta_i))\E(\prod_{j\in S'} (\varphi^\B_{2\B},\eta_j)) = \sum_{S \cup S' = \{1,2,3,4\}} \E(\prod_{i\in S} (h^\B_{2\B},\rho_i'))\E(\prod_{j\in S'} (\varphi^\B_{2\B},\rho_j')).$$
As $h^{\B}_{2\B}$ has the law of $h^\B$, to prove that $\E(\prod_{i=1}^4 (h^{\B},\eta_i)) = \E(\prod_{i=1}^4 (h^{\B},\rho_i'))$ it suffices to show that all terms with $|S| \neq 4$ cancel out. Since $(h^\B,\tilde{\rho}_i)=(h^\B,\rho_i')$ a.s.\ for each $i$, this proves the claim.

To show the cancellation, first observe that as $\varphi^\B_{2\B}$ is harmonic inside each $B_j$, we have that $(\varphi^\B_{2\B},\eta_j) = (\varphi^\B_{2\B},\rho_j')$ for every $j = 1 \dots 4$ and thus the terms with $|S|' = 4$ cancel out. Also, both $h^\B_{2\B}$ and $\varphi^\B_{2\B}$ are mean zero, so all terms with $|S| = 1$ or $|S|' = 1$ also cancel out. We are left to consider the cases when $|S| = 2$ and $|S'| = 2$.
But we already know that $$\E(\prod_{j\in S'} (\varphi^\B_{2\B},\eta_j)) = \E(\prod_{j\in S'} (\varphi^\B_{2\B},\rho_j')),$$ and thus it remains to just verify that for $i \neq j$, we have that $\E((h^\B_{2\B},\eta_i) (h^\B_{2\B},\eta_j)) = \E((h^\B_{2\B},\rho_i') (h^\B_{2\B},\rho_j'))$. This can be however verified directly via the fact that the covariance is a multiple of the Green's function and all $\eta_i, \eta_j, \rho_i',\rho_j'$ have disjoint support.

The proof of \eqref{eq:k4rewrite2} given \eqref{eq:k4rewrite} follows from the same argument, using the domain Markov decomposition of $h^{2\B}$ inside $z_1+a_1\B$ and $z_2+a_1\B$. We omit the details.
\end{proof}
\end{proof}
\subsection{Gaussianity of spherical averages} 

We now show that the $r$-spherical average process around $0$ with a varying radius is a Gaussian process.

\begin{lemma}\label{lem:ca_bm}
$(h_{r}(0))_{r\in(0,1)}$ is a Gaussian process.
\end{lemma}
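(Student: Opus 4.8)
The plan is to realise $(h_r(0))_{r\in(0,1)}$ as a time-changed process with independent increments, and then to prove Gaussianity of each increment by a Lindeberg--Feller central limit theorem, using the near-Wick fourth moment bound of \cref{lem:4mtbnd} to verify the Lindeberg condition. Throughout write $V(r)=\E(h_r(0)^2)$ and recall the scaling function $s$ of \cref{def:s}. First I would record the independent-increment structure: applying the domain Markov property successively in the nested balls $r_n\B\subset\dots\subset r_1\B\subset\B$ (cf. \eqref{eq:it_dmp} and \eqref{eq:multidmp}), one finds that for any $1>r_0>r_1>\dots>r_n>0$ the centred increments $h_{r_k}(0)-h_{r_{k-1}}(0)$, $1\le k\le n$, are mutually independent, each being the value at $0$ of the harmonic part of an independent scaled copy of the field. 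Since the increments are centred and independent, \cref{lem:cov} (giving $k_2=bG^\B$) yields $\E(h_r(0)h_{r'}(0))=V(r\vee r')$, and integrating $bG^\B$ over the sphere gives $V(r)=b(s(r)-s(1))$, with $V(r)\searrow 0$ as $r\uparrow 1$ by \eqref{eq:circav0bc}. Reparametrising by $t=s(r)-s(1)$ produces a centred process $W_t:=h_{r(t)}(0)$, $t\in(0,\infty)$, with independent increments, $W_{0+}=0$, and covariance $\E(W_sW_t)=b\,(s\wedge t)$.

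It therefore suffices to show that every increment $W_{t_2}-W_{t_1}$ is Gaussian: being mutually independent, the increments are then jointly Gaussian, so every finite-dimensional marginal $(W_{t_1},\dots,W_{t_m})$ is a linear image of a vector of independent Gaussian increments and hence jointly Gaussian, which is exactly the statement that the process is Gaussian. By the scaling assumption \ref{inv}, the increment $h_{r_k}(0)-h_{r_{k-1}}(0)$ equals in law $r_{k-1}^{(2-d)/2}h_{\rho_k}(0)$, a scaled spherical average at radius $\rho_k=r_k/r_{k-1}$. Combining \eqref{4mom_av} and \eqref{K4bound} and integrating $g$ over $\partial(\rho\B)^4$ (which contributes $3\big(\iint_{\partial(\rho\B)^2}G^\B\,\rho_0^\rho(dz_1)\rho_0^\rho(dz_2)\big)^2=3(V(\rho)/b)^2$) gives the near-Wick bound $\E(h_\rho(0)^4)\le C(1-\rho)^{-\eta}V(\rho)^2$ for some $\eta\in[0,1)$; rescaling this yields, uniformly,
\begin{equation*}
\E\big((h_{r_k}(0)-h_{r_{k-1}}(0))^4\big)\le C\,(1-\rho_k)^{-\eta}\,\E\big((h_{r_k}(0)-h_{r_{k-1}}(0))^2\big)^2 .
\end{equation*}

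For the central limit step I would fix $t_1<t_2$ and partition $[t_1,t_2]$ into $n$ equal pieces, obtaining radii with $s(r_k)-s(r_{k-1})=(t_2-t_1)/n$ and independent increments $\Delta_k^{(n)}=h_{r_k}(0)-h_{r_{k-1}}(0)$ of equal variance $b(t_2-t_1)/n$, whose sum is exactly $W_{t_2}-W_{t_1}$ for every $n$. Then $\sum_k\E((\Delta_k^{(n)})^2)=b(t_2-t_1)$, while the displayed bound gives $\E((\Delta_k^{(n)})^4)\le C(1-\rho_k)^{-\eta}(b(t_2-t_1)/n)^2$. The main obstacle -- and the reason \cref{lem:4mtbnd} is stated with an exponent $\eta<1$ rather than exact Wick factorisation -- is that as $n\to\infty$ the rescaled radii $\rho_k=r_k/r_{k-1}$ tend to $1$, so $(1-\rho_k)^{-\eta}$ blows up. The point is that since $s$ is smooth with $s'$ bounded away from $0$ and $\infty$ on the relevant compact radius range, one has $r_{k-1}-r_k\asymp (t_2-t_1)/n$ and hence $1-\rho_k\ge r_{k-1}-r_k\gtrsim (t_2-t_1)/n$, so $(1-\rho_k)^{-\eta}\lesssim n^{\eta}$ and
\begin{equation*}
\sum_{k=1}^n\E\big((\Delta_k^{(n)})^4\big)\lesssim b^2(t_2-t_1)^{2-\eta}\,n^{\eta-1}\xrightarrow[n\to\infty]{}0,
\end{equation*}
precisely because $\eta<1$.

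This controls the Lindeberg sum, $\sum_k\E((\Delta_k^{(n)})^2\1_{|\Delta_k^{(n)}|>\eps})\le \eps^{-2}\sum_k\E((\Delta_k^{(n)})^4)\to 0$, so the Lindeberg--Feller theorem forces $W_{t_2}-W_{t_1}\sim N(0,b(t_2-t_1))$. As explained above, Gaussianity of all increments together with their mutual independence gives joint Gaussianity of every finite-dimensional marginal, completing the proof. The only delicate point is the interplay just described between the boundary blow-up of the fourth-moment constant and the gain from the equal-variance partition; everything else is routine bookkeeping with the domain Markov property, \cref{lem:cov}, and the scaling \ref{inv}.
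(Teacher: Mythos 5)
Your proof is correct, and while it shares the paper's skeleton, its final mechanism is genuinely different. Both arguments begin the same way: independent radial increments from the iterated domain Markov property \eqref{eq:it_dmp}, the scaling identity expressing an increment as $r_{k-1}^{(2-d)/2}h_{\rho_k}(0)$, and the near-Wick bound \eqref{4mom_av}--\eqref{K4bound} with its crucial exponent $\eta<1$. From there the paper proves \emph{pathwise regularity}: it shows $\E(h_{1-\delta}(0)^4)\lesssim \delta^{2-\eta}$ (its bound \eqref{eqn:required_4mom}), applies Kolmogorov's continuity criterion, and then invokes \cite[Theorem 11.4]{Kallenberg}, i.e.\ the structural theorem that a continuous process with independent increments is Gaussian. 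You bypass both the continuity criterion and that citation: in the time scale $t=s(r)-s(1)$, where equal time intervals give equal-variance increments, you cut $W_{t_2}-W_{t_1}$ into $n$ independent pieces $\Delta_k^{(n)}$ and verify the Lindeberg condition from the same fourth-moment input, via $(1-\rho_k)^{-\eta}\lesssim \big(n/(t_2-t_1)\big)^{\eta}$ and hence $\sum_k\E\big((\Delta_k^{(n)})^4\big)\lesssim n^{\eta-1}\to 0$; since the row sums equal $W_{t_2}-W_{t_1}$ exactly for every $n$, Lindeberg--Feller pins down its law as $N\big(0,b(t_2-t_1)\big)$. In effect you have inlined a proof of the special case of the Kallenberg theorem that the paper needs: the hypothesis $\eta<1$ plays the identical role in both routes (there it makes the Kolmogorov exponent exceed $1$; here it makes $n^{\eta-1}$ vanish). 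What your route buys is self-containedness and an explicit identification of the radial process as a time-changed Brownian motion (increment laws included); what the paper's route buys is brevity and, as a by-product, a continuous modification of the spherical average process.

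Two small points you should make explicit. First, joint Gaussianity of the finite-dimensional marginals requires that $W_{t_1}$ itself (not only increments with $t_1>0$) be Gaussian; this follows since $W_{t_1}-W_u\to W_{t_1}$ in $L^2$ as $u\downarrow 0$ by \eqref{eq:circav0bc}, and $L^2$ limits of Gaussians are Gaussian (alternatively, run your CLT on $(0,t_1]$, noting that $|s'|$ stays bounded above and below on $[r(t_1),1)$, so the first cell causes no trouble). Second, both the CLT step and the final assembly need the increments to be \emph{mutually}, not merely pairwise, independent; this is exactly what the iterated decomposition behind \eqref{eq:it_dmp} provides, as the paper itself uses in the same way later, but it deserves a sentence.
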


To prove this, we will use the fact that any continuous stochastic process (indexed by positive time) with independent increments, is Gaussian. The fact that $h_r(0)$ has a continuous modification in $r$
comes from \cref{lem:4mtbnd} of the previous subsection. 

\begin{proof}[Proof of \cref{lem:ca_bm}]
Since the variance of this process increases as $r\downarrow 0$, it is natural to parameterise the time so that the process starts at $r = 1$. Thus we set $X_t = h_{1-r}(0)$. Then $X_t\to 0$ in probability as $t\downarrow 0$ and $X$ has independent increments by the domain Markov property (more precisely \eqref{eq:it_dmp}). Thus, to show that it is a Gaussian process, by \cite[Theorem 11.4]{Kallenberg}, it suffices to prove that it admits a continuous modification in $t$. 

We apply Kolmogorov's continuity criterion to show this. By the domain Markov property and scaling assumption it suffices to control the behavior at time $0$, i.e. it is enough to show that for some $C\in (0,\infty)$, some $\eta<1$ and all $\delta\in (0,1)$:
\begin{equation}\label{eqn:required_4mom}
\E(X_\delta^4)\le C\delta^{2-\eta}.
\end{equation}   
For this we use \eqref{K4bound}. We obtain that
$$\E(X_\delta^4)=\iint_{\partial((1-\delta)\B)^4} k_4(z_1,z_2,z_3,z_4) \prod_{i=1}^4 \rho_0^{1-\delta}(dz_i) \lesssim \delta^{-\eta}\iint_{\partial((1-\delta)\B)^4} g(z_1,z_2,z_3,z_4) \prod_{i=1}^4 \rho_0^{1-\delta}(dz_i).$$ 
Now recalling that $g(z_1,z_2,z_3,z_4)$ is the four-point function for the zero boundary GFF in $\B$, we see that the integral on the right hand side is the fourth moment of the spherical average of the GFF at radius $1-\delta$. But this spherical average is a centred Gaussian with variance $-\log(1-\delta)$ when $d=2$ and $1-(1-\delta)^{2-d}$ when $d>2$ - see \cite[equation (13)]{WPgff}. Since these are both of order $\delta$ as $\delta\to 0$ the fourth moment is of order $\delta^2$ by Gaussianity {of the GFF itself}.  
\end{proof}

\subsection{Gaussianity in the general case}

\noindent In what follows, we will often use the co-ordinates $z=(r,\ol{\theta})=(|z|,z/|z|)$ for a point in $\R^d, d\ge 2$.

{We will generalise {the case of spherical averages} to a wider class of processes, stemming from so called spherical harmonics. The interest comes from the following classical theorem (see e.g. \cite[Chapter IV]{SteinWeiss}).

\begin{theorem}[Expansion using spherical harmonics]\label{thm:sph}
In each $d \geq 2$, there is a collection of smooth functions $\psi_{n,j}(\ol{\theta}): \partial \Ub \to \R$ with $n \in \N_0, M_n \in \N$ and $j \in \{1, \dots, M_n\} $ such that
\begin{enumerate}
    \item The functions $\psi_{n,j}(\ol{\theta})$ form an orthonormal basis of $L^2(\partial \Ub)$;
    \item For every $n \in \N_0$ and $j \in \{1, \dots, M_n\} $, we have that $(r,\ol{\theta})\mapsto r^n\psi_{n,j}(\ol{\theta})$ is harmonic in $\Ub$;
    \item For each $n \in \N_0$, one can find radially symmetric functions $f_{n,i}(r):[0,1] \to \R$ with $i \in \N_0$ such that $(e_{n,j,i} = f_{n,i}\psi_{n,j})_{n \in \N_0, j \in \{1, \dots, M_n\},i \in \N}$ form an orthonormal basis of $L^2(\Ub)$.
\end{enumerate}
\end{theorem}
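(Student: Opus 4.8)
The plan is to realise these functions as the classical spherical harmonics and to build the $L^2(\mathbb{B})$ basis by separating the radial and angular variables, so that all three properties come out of the homogeneous-polynomial picture. First I would let $\mathcal{H}_n$ denote the space of restrictions to $\partial\mathbb{B}$ of those polynomials on $\R^d$ that are harmonic and homogeneous of degree $n$. Since the homogeneous polynomials of degree $n$ span a finite-dimensional space, $\mathcal{H}_n$ is finite-dimensional; write $M_n=\dim\mathcal{H}_n$ (which is at least $1$ for every $n\ge 0$, since for instance the real part of $(x_1+\im x_2)^n$ is a nonzero harmonic homogeneous polynomial of degree $n$), and fix an orthonormal basis $\psi_{n,1},\dots,\psi_{n,M_n}$ of $\mathcal{H}_n$ inside $L^2(\partial\mathbb{B})$. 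Property 2 is then automatic: if $\psi_{n,j}$ is the restriction of the harmonic homogeneous polynomial $P_{n,j}$, then homogeneity gives $P_{n,j}(r\ol{\theta})=r^n\psi_{n,j}(\ol{\theta})$, so $(r,\ol{\theta})\mapsto r^n\psi_{n,j}(\ol{\theta})$ is exactly $P_{n,j}$ and hence harmonic in $\mathbb{B}$.

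For Property 1 I would establish orthogonality across degrees and density of the total span. Orthogonality follows from Green's second identity on $\mathbb{B}$: for harmonic homogeneous $P,Q$ of degrees $n\neq m$, the vanishing of $\int_{\mathbb{B}}(P\Delta Q-Q\Delta P)$ combined with the fact that on $\partial\mathbb{B}$ the outward normal derivative is radial, so that Euler's relation gives $\partial_\nu P=nP$ and $\partial_\nu Q=mQ$, yields $(m-n)\int_{\partial\mathbb{B}}PQ\,d\sigma=0$; hence $\mathcal{H}_n\perp\mathcal{H}_m$. Density of $\bigoplus_n\mathcal{H}_n$ follows from the harmonic (Fischer/Gauss) decomposition of homogeneous polynomials: any homogeneous polynomial of degree $n$ can be written as $\sum_{k\ge 0}|x|^{2k}H_{n-2k}$ with each $H_j$ harmonic homogeneous of degree $j$, so its restriction to the sphere, where $|x|=1$, lies in $\sum_k\mathcal{H}_{n-2k}$. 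Since restrictions of all polynomials are dense in $C(\partial\mathbb{B})$ by Stone--Weierstrass, and hence in $L^2(\partial\mathbb{B})$, they lie in the closed linear span of the $\mathcal{H}_n$; combined with orthogonality this shows the $\psi_{n,j}$ form an orthonormal basis, giving Property 1.

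For Property 3 I would exploit the isometry furnished by polar coordinates: writing $dz=r^{d-1}\,dr\,d\sigma(\ol{\theta})$ identifies $L^2(\mathbb{B})$ unitarily with the tensor product $L^2((0,1),r^{d-1}dr)\otimes L^2(\partial\mathbb{B})$. Choosing any orthonormal basis $(f_i)_i$ of the radial factor $L^2((0,1),r^{d-1}dr)$ --- for instance by applying Gram--Schmidt to $1,r,r^2,\dots$, which are total in that finite-measure space --- the products $f_i(r)\psi_{n,j}(\ol{\theta})$ form an orthonormal basis of the tensor product, hence of $L^2(\mathbb{B})$; one may simply set $f_{n,i}=f_i$ independent of $n$, or re-select a radial basis per $n$ if later convenient. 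The only genuinely substantive input is the density step, i.e.\ the harmonic decomposition of homogeneous polynomials together with Stone--Weierstrass; the orthogonality is a one-line integration by parts and Property 3 is just the standard tensor structure of $L^2$ on a product measure space. As all of this is classical, recorded in \cite[Chapter IV]{SteinWeiss}, in practice I would cite it, but the above is the argument I would reconstruct.
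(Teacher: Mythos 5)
Your proof is correct. Note that the paper does not actually prove this theorem: it is stated as a classical result with a pointer to \cite[Chapter IV]{SteinWeiss}, and the accompanying remark only observes that an explicit realisation via Bessel functions and Legendre polynomials exists but is not needed. Your argument --- spherical harmonics as restrictions of harmonic homogeneous polynomials (giving Property 2 by homogeneity), cross-degree orthogonality via Green's second identity together with Euler's relation $\partial_\nu P = nP$ on $\partial\Ub$, completeness via the Fischer decomposition of homogeneous polynomials plus Stone--Weierstrass, and the radial--angular tensor factorisation $L^2(\Ub)\cong L^2((0,1),r^{d-1}dr)\otimes L^2(\partial\Ub)$ for Property 3 --- is precisely the standard development in the cited reference, so it is a faithful self-contained reconstruction of what the paper delegates to the literature. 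The one ingredient you invoke without proof is the Fischer decomposition, which you correctly flag as the substantive input; since it is itself part of the classical theory the paper cites, this does not constitute a gap.
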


\begin{rmk}
In fact one can write out a specific collection of such functions using Legendre polynomials and Bessel functions and choose $e_{n,j,i}$ to be eigenfunctions of $\Delta$. This is not necessary here. 

For example, in $d = 2$, one has $M_0=1$ and $M_n = 2$ for $n\ge 1$, with the $\psi$-s given by the usual Fourier series on the circle. That is, $$\{\{e_{0,1,k}\}_{k\ge 1},\{e_{n,1,k},e_{n,2,k}\}_{n\ge 1,k\ge 1}\}:=\{\{J_0(\alpha_{0,k}{r})\}_{k\ge 1}, \{J_n(\alpha_{n,k}r) \sin(n\theta), J_n(\alpha_{n,k}r) \cos(n\theta)\}_{n\ge 1, k\ge 1}\}$$ form an orthonormal basis of $L^2(\D)$, where $(J_n)_{n\ge 0}$ are the Bessel functions and $\alpha_{n,k}$ are the zeroes of $J_n$ for each $n$. 
\end{rmk}
Using these notations, the main result of this section can be stated as follows:

\begin{prop}[Gaussianity]\label{prop:gauss}
The random variables $(h^\D,e_{n,j,i})_{n \in \N, j \in \{1, \dots, M_n\},i \in \N}$ are jointly Gaussian. In particular $(h^\D, f)_{f \in C_c^\infty(\R^d)}$ is a Gaussian process.
\end{prop}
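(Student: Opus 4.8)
The plan is to exploit the spherical-harmonic decomposition of \cref{thm:sph} to reduce Gaussianity of the field to Gaussianity of a family of radial processes, one per mode, generalising the spherical-average process of \cref{lem:ca_bm} (which is exactly the mode $\psi_{0,1}$). For a mode $(n,j)$ and a radius $r\in(0,1)$ I would set
$$Y^{n,j}_r := \rho^{-n}\int_{\partial\B}\varphi_\B^{r\B}(\rho\,\ol{\theta})\,\psi_{n,j}(\ol{\theta})\,d\ol{\theta},\qquad \rho\in(0,r),$$
the $(n,j)$-th coefficient of the a.s.\ harmonic function $\varphi_\B^{r\B}$ from the domain Markov decomposition of $h^\B$ in $r\B$. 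Since $\varphi_\B^{r\B}$ is a.s.\ harmonic, item (2) of \cref{thm:sph} gives the expansion $\varphi_\B^{r\B}(\rho\,\ol{\theta})=\sum_{m,k}Y^{m,k}_r\,\rho^m\psi_{m,k}(\ol{\theta})$ for $\rho<r$, so by orthonormality of the $\psi_{m,k}$ the quantity $Y^{n,j}_r$ is well defined and independent of $\rho\in(0,r)$. Using that the covariance is now known to equal $bG^\B$ (\cref{lem:cov}), one checks directly from the boundary decay of the Green's function that $\E((Y^{n,j}_r)^2)\to 0$ as $r\uparrow1$, and that $r^nY^{n,j}_r$ is the $L^2$-limit of $(h^\B,\nu^{n,j,\eps}_r)$ for unit-mass mollifiers $\nu^{n,j,\eps}_r$ carrying the angular weight $\psi_{n,j}$ and supported in thin annuli shrinking onto $\partial(r\B)$; the contribution of the inner field $h^{r\B}_\B$ to this limit vanishes, again by an explicit Green's-function computation rather than an appeal to \ref{zbc}.

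The key structural observation is that the vector-valued process $(Y^{n,j}_\cdot)_{n,j}$ has independent increments. Indeed, for $r<r'$ both $\varphi_\B^{r\B}$ and $\varphi_\B^{r'\B}$ are harmonic on $r\B$, so the increment $Y^{n,j}_r-Y^{n,j}_{r'}$ is the $(n,j)$-th coefficient of $\varphi_\B^{r\B}-\varphi_\B^{r'\B}$; by \eqref{eq:it_dmp} this difference is independent of $\varphi_\B^{r'\B}$, whence the whole increment vector $(Y^{n,j}_r-Y^{n,j}_{r'})_{n,j}$ is independent of $(Y^{n,j}_{r'})_{n,j}$. Thus, for any finite set $F$ of modes, the process $(Y^{n,j}_r)_{(n,j)\in F}$, reparametrised so as to start at $r=1$, is an $\R^F$-valued process with independent increments and deterministic starting value $0$. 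By the $\R^F$-valued form of \cite[Theorem 11.4]{Kallenberg} --- a continuous process with independent increments has multivariate Gaussian increments --- it is a (multivariate) Gaussian process, once we know it admits a continuous modification. Crucially, this yields joint Gaussianity \emph{across distinct modes}, which is what we ultimately need; letting $F$ exhaust all modes then shows that the entire family $(Y^{n,j}_r)_{n,j,\,r}$ is jointly Gaussian.

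Establishing the continuous modification is where I expect the main work to lie, and it is carried out by Kolmogorov's criterion exactly as in \cref{lem:ca_bm}. By the independent-increments structure together with the scaling in \eqref{eq:it_dmp}, it suffices to bound, for each finite linear combination $Z=\sum_{(n,j)\in F}a_{n,j}Y^{n,j}$, the fourth moment of its increments near $r=1$; this fourth moment is a fourfold integral of $k_4$ against the signed spherical measures $\big(\sum a_{n,j}\psi_{n,j}\big)\rho_0^{r}(d\cdot)$. Inserting the Wick-type bound \eqref{K4bound} of \cref{lem:4mtbnd} replaces $|k_4|$ by a $\delta^{-\eta}$ multiple of the genuine GFF four-point function $g$, and reduces the estimate to the fourth moment of the corresponding spherical-harmonic average of an actual Gaussian free field. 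The latter is the fourth moment of a centred Gaussian, hence the square of its variance, which is of order $\delta^2$ at radius $1-\delta$ (the boundary decay being no slower than for the spherical average of \cref{lem:ca_bm}, uniformly over the finitely many modes in $F$); this yields $\E((Z_{1-\delta})^4)\lesssim\delta^{2-\eta}$ with $\eta<1$, which is exactly what Kolmogorov's criterion requires. The bookkeeping needed to keep constants uniform over $(n,j)\in F$, and the harmless signed nature of $\sum a_{n,j}\psi_{n,j}$, are the only technical points, and no idea beyond \cref{lem:4mtbnd} enters.

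Finally, I would transfer Gaussianity back to the field. Writing $z=r\ol{\theta}$ and integrating in polar coordinates,
$$(h^\B,e_{n,j,i})=\int_0^1 f_{n,i}(r)\,r^{d-1}\Big(\int_{\partial\B}h^\B(r\ol{\theta})\psi_{n,j}(\ol{\theta})\,d\ol{\theta}\Big)dr=\int_0^1 f_{n,i}(r)\,r^{d-1+n}\,Y^{n,j}_r\,dr,$$
where the inner angular integral has been identified with $r^nY^{n,j}_r$ using the first paragraph (the inner-field contribution vanishing by the Green's-function computation there). Thus each $(h^\B,e_{n,j,i})$ is a bounded linear functional of the jointly Gaussian family $(Y^{n,j}_r)_{n,j,\,r}$, so the collection $(h^\B,e_{n,j,i})_{n,j,i}$ is jointly Gaussian. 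Since the $e_{n,j,i}$ form an orthonormal basis of $L^2(\B)$, and since $L^2(\B)\hookrightarrow H^{-1}(\B)$ while $(h^\B,\cdot)$ is $L^2(\P)$-continuous on $H^{-1}(\B)$ by \cref{cor}, every $(h^\B,f)$ with $f\in C_c^\infty(\B)$ is an $L^2(\P)$-limit of finite linear combinations of the $(h^\B,e_{n,j,i})$ and is therefore Gaussian. This proves that $(h^\B,f)_{f\in C_c^\infty(\R^d)}$ is a Gaussian process, as claimed.
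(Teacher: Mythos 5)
Your proposal is correct, and its engine is the same as the paper's: spherical harmonics (\cref{thm:sph}), radial coefficient processes with independent increments coming from the domain Markov property, continuity via Kolmogorov's criterion and the Wick-type bound \eqref{K4bound}, the continuous-independent-increments-implies-Gaussian theorem, and transfer back to the field through the $L^2(\B)$ basis and \cref{cor}. Your constancy in $\rho$ of the coefficients $\rho^{-n}\nu^{\psi}_\rho(\varphi)$, read off from the classical solid-harmonics expansion, is exactly the paper's \cref{lem:sin_const}, proved there by Green's identity. The one genuine structural difference is how joint Gaussianity is assembled: the paper proves Gaussianity of \emph{scalar} processes given by fixed linear combinations across modes (\cref{lem:gp}), deduces fixed-radius joint Gaussianity by Cram\'er--Wold, and then needs a separate lemma, iterating the DMP in nested balls, to treat distinct radii; you instead run the argument for the $\R^F$-valued process $(Y^{n,j}_r)_{(n,j)\in F}$ and invoke the multivariate form of Kallenberg's theorem, obtaining joint Gaussianity across modes \emph{and} radii at once, which makes the paper's penultimate lemma unnecessary. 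This shortcut is legitimate: the multivariate statement either is what Kallenberg proves or follows from the scalar one by applying it to each linear functional and using Cram\'er--Wold on the increments. Two points to tighten when writing it up, neither a gap since the paper is equally brief: ``independent increments'' must mean mutual independence over disjoint annuli, which requires iterating the DMP (the increment $\varphi_\B^{r\B}-\varphi_\B^{r'\B}$ is a function of $h_\B^{r'\B}$, which is independent of $\varphi_\B^{r'\B}$, and so on inductively), not just \eqref{eq:it_dmp} applied once; and the polar-coordinates identity $(h^\B,e_{n,j,i})=\int_0^1 f_{n,i}(r)\,r^{d-1+n}Y^{n,j}_r\,dr$ needs a short stochastic-Fubini/$L^2$ justification using $k_2=bG^\B$ and \cref{cor}, matching the paper's ``almost sure limits of weighted sums'' step.
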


To prove \cref{prop:gauss} we will choose appropriate radial functions $g_{n,j}$ for which we can verify that $h^\B$ tested against $ g_{n,j}(r)\psi_{n,j}(\ol{\theta})$ on each sphere at radius $r$ is a Gaussian process in $r\in(0,1)$. The key observation is the following. For $r\in (0,1)$ and a smooth function $\psi: \partial \Ub \to \R$ let $\nu^\psi_r$ be the signed measure defined by the condition that
\begin{equation}\label{eq:av}
\nu^\psi_r(\phi)=\int_{\partial \Ub} \psi(\ol{\theta}) \phi(r,\ol{\theta}) \rho_0^1(d\ol{\theta})
\end{equation}
for all functions $\phi$ such that $\phi(r,\ol{\theta})\in L^1(\partial \Ub,\rho_0^1(d\ol{\theta}))$, where as before $\rho_0^1$ is uniform measure on $\partial \B$.

\begin{lemma}\label{lem:sin_const}
Suppose that $\varphi$ is a harmonic function in $\Ub$. Suppose also that $\psi(\ol{\theta})$ is a smooth function defined on $\partial \Ub$ such that $(r,\ol{\theta})\mapsto \psi(\ol{\theta})r^{n}$ is harmonic in $\Ub$. Then $r^{-n}\nu^\psi_r(\varphi)$ is constant {as a function of $r$ on $(0,1)$}.
\end{lemma}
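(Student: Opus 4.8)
The statement is the orthogonality of spherical harmonics of distinct degrees in disguise: writing the harmonic function as $\varphi(r,\ol\theta)=\sum_m r^m P_m(\ol\theta)$ with each $P_m$ a degree-$m$ spherical harmonic, orthogonality of $\psi$ (degree $n$) against $P_m$ for $m\ne n$ would immediately give $\nu^\psi_r(\varphi)=r^n\langle\psi,P_n\rangle_{\partial\Ub}$. Rather than justify this expansion and its termwise integration, the plan is to give a self-contained argument via Green's second identity, which sidesteps all convergence issues and simultaneously covers every smooth spherical harmonic $\psi$ at once.

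Concretely, I would show that $r^{-n}\nu^\psi_r(\varphi)$ has vanishing derivative on $(0,1)$. Set $I(r):=\nu^\psi_r(\varphi)=\int_{\partial\Ub}\psi(\ol\theta)\,\varphi(r,\ol\theta)\,\rho_0^1(d\ol\theta)$. Since $\varphi$ is harmonic in $\Ub$, it is smooth on every closed ball $\ol{r\Ub}\subset\Ub$, so I may differentiate under the integral to obtain $I'(r)=\int_{\partial\Ub}\psi(\ol\theta)\,\partial_r\varphi(r,\ol\theta)\,\rho_0^1(d\ol\theta)$. Because $\frac{d}{dr}\big(r^{-n}I(r)\big)=r^{-n-1}\big(rI'(r)-nI(r)\big)$, the lemma is equivalent to the identity
$$\int_{\partial\Ub}\psi(\ol\theta)\,\big(r\,\partial_r\varphi-n\varphi\big)(r,\ol\theta)\,\rho_0^1(d\ol\theta)=0\qquad\text{for all }r\in(0,1).$$

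To establish this identity I would apply Green's second identity on the ball $r\Ub$ to the pair $\varphi$ and $u(z):=|z|^n\psi(z/|z|)$, the latter being harmonic in $\Ub$ by hypothesis. As both functions are harmonic the volume term vanishes, leaving $\int_{\partial(r\Ub)}\big(u\,\partial_\nu\varphi-\varphi\,\partial_\nu u\big)\,dS=0$. On $\partial(r\Ub)$ the outward normal derivative is $\partial_r$; here $u(r,\ol\theta)=r^n\psi(\ol\theta)$ and $\partial_\nu u=n r^{n-1}\psi(\ol\theta)$, while the surface element scales as $dS=r^{d-1}\,dS_1$ relative to the unit sphere, where $dS_1$ is a fixed positive multiple of $\rho_0^1$. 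Substituting and factoring out the common positive constant $r^{n+d-1}$ (times the total surface area) turns the boundary integral into exactly a positive multiple of the displayed identity, which therefore holds; the case $n=0$ simply recovers the mean value property. The only points needing care — and they are the mildest possible obstacle — are the justification of differentiation under the integral, which is immediate from the interior smoothness of $\varphi$, and bookkeeping of the normalisation relating $\rho_0^1$ to surface measure, both of which are routine.
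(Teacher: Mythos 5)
Your proposal is correct and follows essentially the same route as the paper: both show the $r$-derivative of $r^{-n}\nu^\psi_r(\varphi)$ vanishes by applying Green's second identity on $r\Ub$ to the harmonic pair $\varphi$ and $(r,\ol\theta)\mapsto r^n\psi(\ol\theta)$, with the volume term dropping out by harmonicity. Your version merely adds explicit bookkeeping (differentiation under the integral, the $r^{d-1}$ scaling of surface measure) that the paper leaves implicit.
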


\begin{proof}
Let us fix $r_0\in (0,1)$: we will show that $(\frac{d}{dr})[r^{-n}\nu_{r}^\psi(\varphi)]|_{r=r_0}=0$, which implies the result. By the second Green's identity applied in $r_0\Ub$ to the harmonic functions $\varphi$ and  $(r,\ol{\theta})\mapsto r^n\psi$, we can write 
$$\int_{\partial (r_0\Ub)}(\varphi\frac{d}{dr}[r^n\psi] - r^n\psi\frac{d}{dr}\varphi) = \int_{r_0 \Ub} (\varphi \Delta[ r^n\psi] - r^n\psi \Delta \varphi),$$
where we are integrating against the standard volume measure on $r_0\B$ on the right-hand side, and the induced measure on its boundary on the left, which is a multiple of uniform measure.
Now the right-hand side is zero as both $r^n\psi$ and $\varphi$ are harmonic by assumption. Thus we deduce that
$$n\int_{\partial \Ub}\varphi(r_0,\ol{\theta})\psi(\ol{\theta}) \, \rho_0^1(d\ol{\theta})= r_0\int_{\partial \Ub}\psi(\ol{\theta})\frac{d}{dr}\varphi(r,\ol{\theta})\big|_{r=r_0} \, \rho_0^1(d\ol{\theta}).$$
It follows that
$$\left.\frac{d}{dr}[r^{-n}\nu^\psi_r(\varphi)]\right|_{r=r_0} = -nr_0^{-n-1}\int_{\partial \Ub}\psi(\ol{\theta}) \varphi(r_0,\ol{\theta})\, \rho_0^1(d\ol{\theta})+ r_0^{-n}\int_{\partial \Ub}\psi(\ol{\theta})\frac{d}{dr}\varphi(r,\ol\theta)\big|_{r=r_0}\, \rho_0^1(d\ol{\theta}) = 0.$$
\end{proof}

Notice that all the functions $\psi_{n,j}$ of \cref{thm:sph} satisfy the conditions for the function $\psi$ in this lemma. A similar proof to the spherical average case now implies that any finite linear combination of the functions $(r^{-n}\nu^{\psi_{n,j}}_r)_{n \geq 0, j \in \{1, \dots, M_n\}}$ tested against $h^\B$ gives rise to a Gaussian process. 

\begin{lemma}\label{lem:gp} The process $(A_r)_{r\in (0,1)}$ defined by 
\begin{equation}\label{eq:gp}
A_r := \sum_{i=1}^k a_i (h^\D,r^{-n_i}\nu^{\psi_{n_i,j_i}}_r)
\end{equation} is Gaussian for any $k\ge 0$, $n_i\in \N_0,j_i \in \{1, \dots, M_{n_i}\}$ and $(a_1,\dots, a_k)$ real. 

In particular, for any fixed radius $r \in (0,1)$, we have that $(h^\B, \nu^{\psi_{n,j}}_r)_{n \in \N_0, j \in \{1, \dots, M_n\}}$ is jointly Gaussian.
\end{lemma}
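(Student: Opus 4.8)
The plan is to mirror the proof of \cref{lem:ca_bm} for spherical averages. Namely, I would reparametrise so that the process starts at radius $r=1$, show that $(A_r)_{r\in(0,1)}$ admits a continuous modification and has independent increments, and then invoke \cite[Theorem 11.4]{Kallenberg} to conclude that a continuous process with independent increments is Gaussian. As in \cref{lem:ca_bm}, I would anchor the process by checking that $A_r\to 0$ in $L^2$ as $r\uparrow 1$; here this follows since $\E((h^\B,\nu^\psi_r)^2)=b\iint_{(\partial\Ub)^2}\psi(\ol{\theta})\psi(\ol{\theta}')G^\B(r\ol{\theta},r\ol{\theta}')\,\rho_0^1(d\ol{\theta})\rho_0^1(d\ol{\theta}')\to 0$, because $G^\B$ vanishes on $\partial\B$. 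The final ``in particular'' claim is then immediate: for fixed $r$ the quantity $A_r=\sum_i a_i r^{-n_i}(h^\B,\nu^{\psi_{n_i,j_i}}_r)$ realises an arbitrary linear combination of the family $(h^\B,\nu^{\psi_{n,j}}_r)_{n,j}$ (as $r^{-n_i}\neq 0$), so its Gaussianity together with the Cram\'er--Wold device gives joint Gaussianity of that family.

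The key step --- the one genuinely using \cref{lem:sin_const} --- is the independence of increments. First I would record that $A_r$ is a measurable function of the single harmonic field $\varphi_\B^{r\B}$: testing $h^\B$ against the spherical measure $\nu^\psi_r$ supported on $\partial(r\B)$, the inner part $h^{r\B}_\B$ in the decomposition $h^\B=h^{r\B}_\B+\varphi_\B^{r\B}$ contributes nothing, since its $L^2$-norm against $\nu^\psi_r$ is governed by $G^{r\B}$, which vanishes on $\partial(r\B)$ --- the signed-measure analogue of the zero boundary condition \ref{zbc}. Hence $A_r=\sum_i a_i r^{-n_i}\nu^{\psi_{n_i,j_i}}_r(\varphi_\B^{r\B})$. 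Now for $r_1<r_2$ the function $\varphi_\B^{r_2\B}$ is harmonic on $r_2\B\supset r_1\B$, so applying \cref{lem:sin_const} in $r_2\B$ gives $r_1^{-n}\nu^{\psi_{n,j}}_{r_1}(\varphi_\B^{r_2\B})=r_2^{-n}\nu^{\psi_{n,j}}_{r_2}(\varphi_\B^{r_2\B})$; subtracting from the displayed expression for $A_{r_1}$ yields
\[
A_{r_1}-A_{r_2}=\sum_i a_i r_1^{-n_i}\nu^{\psi_{n_i,j_i}}_{r_1}\big(\varphi_\B^{r_1\B}-\varphi_\B^{r_2\B}\big),
\]
so the increment is a measurable function of $\varphi_\B^{r_1\B}-\varphi_\B^{r_2\B}$ alone. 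By \eqref{eq:it_dmp}, together with the measurability of $\varphi_\B^{r'\B}$ with respect to $\varphi_\B^{r\B}$ for $r<r'$ (as in the discussion around \eqref{eq:multidmp}), the differences $\varphi_\B^{r_j\B}-\varphi_\B^{r_{j+1}\B}$ over a nested chain $r_1<\dots<r_m$ are mutually independent, exactly as in \cref{lem:ca_bm}, and hence so are the increments of $A$.

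For the continuous modification I would apply Kolmogorov's criterion on each compact subinterval $[\alpha,\beta]\subset(0,1)$, which suffices since continuity is local. Writing $A_r-A_{r'}=(h^\B,\mu)$ with $\mu=\sum_i a_i\big(r^{-n_i}\nu^{\psi_{n_i,j_i}}_r-(r')^{-n_i}\nu^{\psi_{n_i,j_i}}_{r'}\big)$ a signed measure on the two spheres, the Wick-type bound \eqref{K4bound} gives $\E((h^\B,\mu)^4)\lesssim\delta^{-\eta}\iiiint g\,d\mu^{\otimes 4}$, where $\delta=1-\beta$ is bounded away from $0$ on the subinterval. Since $g$ is exactly the GFF four-point function, $\iiiint g\,d\mu^{\otimes 4}=3\big(\iint G^\B\,d\mu^{\otimes 2}\big)^2$, i.e.\ three times the squared variance of the corresponding increment of the genuine GFF process $\sum_i a_i(h^{\mathrm{GFF}},r^{-n_i}\nu^{\psi_{n_i,j_i}}_r)$. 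As this process is (via the radial/spherical-harmonic decomposition of the GFF, cf.\ the computation in \cite[equation (13)]{WPgff}) a time-changed Gaussian process with variance increments locally Lipschitz in $r$, the latter is $O(|r-r'|^2)$, so $\E((A_r-A_{r'})^4)\le C(\alpha,\beta)|r-r'|^2$, which is more than enough for Kolmogorov.

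I expect this continuity estimate to be the main technical obstacle: it requires \eqref{K4bound} to dominate our possibly non-Gaussian four-point function by the GFF one, care with the diagonal and the signed nature of $\mu$ when passing to the integral against $g$, and explicit control of the GFF variance increments through the radial decomposition. By contrast, the conceptually central but technically light point is the independent-increments argument, where \cref{lem:sin_const} is precisely what collapses the dependence of the increment on the ``outer'' harmonic field $\varphi_\B^{r_2\B}$ and leaves only the independent difference $\varphi_\B^{r_1\B}-\varphi_\B^{r_2\B}$.
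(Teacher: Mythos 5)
Your skeleton (reparametrisation, independent increments via \cref{lem:sin_const} plus the domain Markov property, a Kolmogorov continuity estimate, then \cite[Theorem 11.4]{Kallenberg}) is the same as the paper's, and your independent-increments argument --- expressing $A_r$ as a functional of $\varphi_\B^{r\B}$, using \cref{lem:sin_const} to transport the test from one radius to a smaller one, and invoking \eqref{eq:it_dmp} for the nested differences --- is essentially the paper's own, as is the ``in particular'' step. The genuine gap is in your continuity estimate. You write $A_r-A_{r'}=(h^\B,\mu)$ with $\mu$ a signed measure carried by two spheres of bulk radii $r,r'\in[\alpha,\beta]$, and claim
\[
\E((h^\B,\mu)^4)\lesssim \delta^{-\eta}\iiiint g\, d\mu^{\otimes 4}=3\delta^{-\eta}\Big(\iint G^\B\, d\mu^{\otimes 2}\Big)^2 .
\]
This step fails for two reasons. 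First, \eqref{K4bound} is only stated, and only proved, for four points lying on one common sphere $\partial((1-\delta)\B)$ (its proof of the lower bound on $g$ uses $|x|=|y|=1-\delta$), so it does not cover quadruples spread over two distinct spheres. Second, and more fundamentally, \eqref{K4bound} bounds $|k_4|$, so even granting such an extension the only legitimate conclusion is
\[
\Big|\iiiint k_4\, d\mu^{\otimes 4}\Big|\le \delta^{-\eta}\iiiint g\, d|\mu|^{\otimes 4}=3\delta^{-\eta}\Big(\iint G^\B\, d|\mu|^{\otimes 2}\Big)^2,
\]
with the total-variation measure $|\mu|$. This destroys precisely the cancellation you are relying on: $\iint G^\B d\mu^{\otimes 2}$ is $O(|r-r'|)$ only because of cancellation between the contributions of the two spheres, whereas $\iint G^\B d|\mu|^{\otimes 2}$ is of order $1$ for bulk radii. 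Your chain of inequalities therefore yields $\E((A_r-A_{r'})^4)\lesssim\delta^{-\eta}$ with no power of $|r-r'|$ at all, and Kolmogorov's criterion cannot be run; no pointwise absolute-value bound on the four-point function can recover this cancellation.

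The paper's proof avoids this trap, and this is the one place where its argument is genuinely different from yours. Using the independent-increment structure you already established, together with the scaling assumption \ref{inv}, the increment $A_s-A_r$ (for $s<r$) equals $\sum_i a_i(h^{r\B}_\B, s^{-n_i}\nu^{\psi_{n_i,j_i}}_s)$ and is hence distributed as a scaled copy of a process of the same form (with modified coefficients) evaluated at radius $s/r$; so it suffices to bound the fourth moment anchored at ``time zero'', i.e.\ $\E(A_{1-\delta}^4)$ for small $\delta$. That moment involves only points on the single sphere $\partial((1-\delta)\B)$, where \eqref{K4bound} does apply; and there one may dominate the signed measures by $\|\psi_{n_i,j_i}\|_\infty$ times the uniform measure at no loss, because $g$ itself is of order $\delta^2$ on that sphere --- the smallness comes from proximity to $\partial\B$, not from cancellation between spheres. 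This is what the paper means by ``using the fact that each $\psi_{n,j}$ is bounded on $\partial \B$ \dots similarly to the case of the spherical average''. Replacing your two-sphere estimate by this reduction repairs the proof.
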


}

\begin{proof} 
First let us fix $a_0,\dots, a_k$ in $\R$. We again parametrise the process from large radii towards small radii, i.e. let's set $X_t = A_{1-t}$. We first argue using \cref{lem:sin_const} that $X_t$ has independent increments. 

For clarity, let us first show this for $\tilde X_t = (h^\D,(1-t)^{-n_i}\nu^{\psi_{n_i,j_i}}_{1-t})$. Indeed, for each $i \in \{1, \dots, k\}$ and each $s < r < 1$ we can write by domain Markov property
$$(h^\D,s^{-n_i}\nu^{\psi_{n_i,j_i}}_s) = (h^{r\D},s^{-n_i}\nu^{\psi_{n_i,j_i}}_s) + (\varphi^{r\D},s^{-n_i}\nu^{\psi_{n_i,j_i}}_s).$$ 
 As already mentioned, $\psi_{n_i,j_i}$ satisfies the conditions of \cref{lem:sin_const}, and thus 
$$(\varphi^{r\D},s^{-n_i}\nu^{\psi_{n_i,j_i}}_s) = \lim_{u\uparrow r}(\varphi^{u\D},u^{-n_i}\nu^{\psi_{n_i,j_i}}_u)=(h^\B,r^{-n_i}\nu_r^{\psi_{n_i,j_i}})$$
where the last equality follows since  $(h^{r\B},u^{-n_i}\nu^{\psi_{n_i,j_i}}_u)$ converges to $0$ as $u\uparrow r$ in $L^2$, by a direct calculation using the Green's function.

As $h^{r\D}$ and $\varphi^{r\D}$ are independent, we conclude that {$\tilde X_t = (h^\D,(1-t)^{-n_i}\nu^{\psi_{n_i,j_i}}_{1-t})$ has independent increments.} {The same} argument can be directly applied to get the claim for $X_t$.

{Now, $X_t$ is also centred and square integrable, converging to $0$ in probability as $t\downarrow 0$}, by definition and assumptions on $h^\B$. Moreover, {using the fact that each $\psi_{n,j}$ is bounded on $\partial \B$, we can use the 4th moment bound \eqref{K4bound} to apply Kolmogorov's criteria similarly to the case of the spherical average and obtain that $X_t$ possesses an a.s. continuous modification.} This implies that the process is Gaussian by \cite[Theorem 11.4]{Kallenberg}.

For the final claim, let us fix $r\in (0,1)$. Then $(h^\B, \nu^{\psi_{n,j}}_r)_{n \geq 0, j \in \{1, \dots, M_n\}}$ is jointly Gaussian, since for any $k\ge 1$, $b_1,\dots, b_k\in \R$, $n_i\in \N_0$ and $j_i\in \{1,\dots, M_{n_i}\}$, we can apply the above with $a_i=r^{n_i}b_i$ for each $i$ to see that $\sum_i b_i (h^\B,\nu_r^{\psi_{n_i,j_i}})$ is a Gaussian random variable.
\end{proof}

There is one more step required to deduce \cref{prop:gauss}: to extend the joint Gaussianity to varying radii.

\begin{lemma}
For any $k \geq 1$,   and any $(r_i,n_i,j_i)_{1\le i \le k}$ with $r_i\in (0,1), n_i\in \N_0$ and $j_i\in \{1,\dots, M_{n_i}\}$ for each $i$, the vector $$\left((h^\B, \nu^{\psi_{n_1,j_1}}_{r_1}),\dots, (h^\B, \nu^{\psi_{n_k,j_k}}_{r_k})\right)$$ is Gaussian. 

In particular, we have that $(h^\B, \nu^{\psi_{n,j}}_r)_{r \in (0,1), n \geq 0, j \in \{1, \dots, M_n\}}$ is jointly Gaussian.

\end{lemma}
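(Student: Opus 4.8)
The plan is to reduce joint Gaussianity of the vector to a one-dimensional statement, and then exploit the independent harmonic ``annular'' building blocks produced by iterating the domain Markov property. Since a random vector is Gaussian precisely when every real linear combination of its entries is a univariate Gaussian, it suffices to fix reals $c_1,\dots,c_k$ and to show that
\[
S:=\sum_{i=1}^k c_i\,(h^\B,\nu^{\psi_{n_i,j_i}}_{r_i})
\]
is Gaussian. Absorbing the deterministic factors $r_i^{n_i}$ into the $c_i$, I may equivalently work with the normalised quantities $V^{(n,j)}_r:=(h^\B,r^{-n}\nu^{\psi_{n,j}}_r)$, which are the objects for which \cref{lem:sin_const} is stated. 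All of the sphere pairings below are understood as $H^{-1}$ pairings in the sense of \cref{cor}.

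First I would set up the decomposition. Let $1>s_1>\cdots>s_p>0$ be the distinct values among $r_1,\dots,r_k$, so each $r_i=s_{m(i)}$. Iterating the domain Markov property along the nested balls $\B\supset s_1\D\supset\cdots\supset s_p\D$ (using \eqref{eq:it_dmp}, \eqref{eq:multidmp} and the uniqueness of the decomposition) yields a splitting $h^\B=\Phi_1+\cdots+\Phi_p+H_p$ into \emph{mutually independent} pieces, where each $\Phi_q$ is harmonic inside $s_q\D$ and $H_p$ is an independent scaled copy of the field supported in $s_p\D$. The key computation is to test $V^{(n,j)}_{s_m}$ against this splitting. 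For $q>m$ the piece $\Phi_q=H_{q-1}-H_q$ is supported in $s_{q-1}\D$ with $s_{q-1}\le s_m$, and $H_p$ is supported in $s_p\D$ with $s_p\le s_m$; testing these against the sphere measure $\nu^{\psi}_{s_m}$ on $\partial(s_m\D)$ therefore gives $0$ — either because the support lies strictly outside (property \ref{sup}), or because it lies on the boundary, where the zero boundary condition forces the $L^2$ norm to vanish, exactly as in the proof of \cref{lem:gp}. Hence
\[
V^{(n,j)}_{s_m}=\sum_{q=1}^{m}(\Phi_q,s_m^{-n}\nu^{\psi_{n,j}}_{s_m}),
\]
where the $q$-th summand is measurable with respect to $\Phi_q$.

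Substituting this into $S$ and regrouping by $q$, I would write $S=\sum_{q=1}^p T_q$ with $T_q:=\sum_{i:\,m(i)\ge q} c_i\,(\Phi_q,r_i^{-n_i}\nu^{\psi_{n_i,j_i}}_{r_i})$. As $T_q$ is $\Phi_q$-measurable and the $\Phi_q$ are independent, the $T_q$ are independent, so it remains only to show that each $T_q$ is Gaussian; then $S$ is Gaussian as a sum of independent Gaussians. To see that $T_q$ is Gaussian, note that $\Phi_q$ is harmonic in $s_q\D$ and that every radius appearing satisfies $r_i\le s_q$; by the scaled form of \cref{lem:sin_const}, each term $(\Phi_q,r_i^{-n_i}\nu^{\psi_{n_i,j_i}}_{r_i})$ is independent of the radius and may be evaluated at a common $u<s_q$, so that $T_q=\lim_{u\uparrow s_q}\sum_{i:\,m(i)\ge q}c_i(\Phi_q,u^{-n_i}\nu^{\psi_{n_i,j_i}}_u)$. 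Writing $\Phi_q=H_{q-1}-H_q$ and using that the inner field $H_q$ tested against a sphere measure approaching its own boundary tends to $0$ in $L^2$, this equals $\lim_{u\uparrow s_q}\sum_i c_i(H_{q-1},u^{-n_i}\nu^{\psi_{n_i,j_i}}_u)$. For each fixed $u$ the latter is a fixed-radius linear combination of spherical-harmonic tests of the scaled field $H_{q-1}$, hence Gaussian by the final statement of \cref{lem:gp} together with the translation and scaling assumption \ref{inv}; being an $L^2$-limit of Gaussians, $T_q$ is Gaussian too. The ``in particular'' claim is then immediate, since joint Gaussianity of every finite subvector is precisely the assertion that the whole family is jointly Gaussian.

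I expect the main obstacle to be the bookkeeping in the key computation: rigorously justifying that the ``wrong'' pieces vanish when tested against $\nu^{\psi}_{s_m}$, cleanly separating the cases where a sphere lies strictly outside a supporting ball (use property \ref{sup}) from the boundary cases (use the zero boundary condition and the Green's-function identification from \cref{lem:cov}), and confirming throughout that these sphere pairings are well defined via \cref{cor}. Once the independent harmonic decomposition is correctly matched to the telescoping structure of the radial processes, the Gaussianity of each block $T_q$ follows cleanly from \cref{lem:sin_const} and \cref{lem:gp}.
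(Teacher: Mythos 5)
Your proposal is correct and takes essentially the same route as the paper's own proof: iterate the domain Markov property along the nested balls to get mutually independent harmonic pieces, use \cref{lem:sin_const} (plus the $L^2$-vanishing of the inner field near its own boundary) to move all tests of a given piece $\Phi_q$ to a common radius, invoke the fixed-radius joint Gaussianity of \cref{lem:gp} for the scaled copy $H_{q-1}$, and conclude by independence of the pieces. The only differences are cosmetic — you reduce to univariate linear combinations and group terms by $\Phi_q$ from the outset (which also absorbs repeated radii and the $k\ge 3$ bookkeeping directly), whereas the paper presents the same decomposition as a Gaussian vector built from independent tuples, treating non-distinct radii in a closing remark.
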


\begin{proof}
The second statement is an immediate consequence of the first. So, let us fix $k,(r_i,n_i,j_i)_{i\le k}$ as in the statement. 
For now, let us assume that $r_1>\dots>r_k>0$ (we come back to the general case at the end of the proof). We iterate the domain Markov property inside each ball $r_i\B$ as follows: we write $h^\B=h^0$, then $h^0=h^1+\varphi^1$ by applying the Markov property inside $r_1\B$, then $h^1=h^2+\varphi^2$ inside $r_2\B$ etc.\ so that $(\varphi^1,\dots, \varphi^k)$ are mutually independent by \eqref{eq:it_dmp}. Shortening the notation $\nu_r^{\psi_{n_i,j_i}}$ to $\nu_r^i$ for each $i=1,\dots, k$, it suffices to show that \begin{equation}\label{dmp_jointcircles}\left((h^0,\nu_{r_1}^1),(\varphi^1,\nu_{r_2}^2),(h^1,\nu_{r_2}^2),\dots, (\varphi^{k-1},\nu_{r_k}^k),(h^{k-1},\nu_{r_k}^k)\right)\end{equation} is a Gaussian vector - indeed, we can write each $(h^\B, \nu_{r_i}^i) = \sum_{j =1}^i (\varphi^j, \nu_{r_i}^i)$. {However, as in the proof \cref{lem:gp}, it follows from \cref{lem:sin_const}} 
that  $$(h^{j-1},\nu_{r_j}^{\psi_{n,i}})=(\varphi^j,\nu_{r_j}^{\psi_{n,i}})=(\frac{r_{j+1}}{r_j})^{n}(\varphi^j,\nu_{r_{j+1}}^{\psi_{n,i}})$$ for any $\psi_{n,i}$ and any $j$.
Hence, \eqref{dmp_jointcircles} can be rewritten as 
\begin{equation*}\big((\varphi^1,\nu_{r_1}^1),(\frac{r_2}{r_1})^{n_2}(\varphi^1,\nu_{r_1}^2),(\varphi^2,\nu_{r_2}^2),(\frac{r_3}{r_2})^{n_3}(\varphi^2,\nu_{r_2}^3),\dots,(\varphi^{k-1},\nu_{r_{k-1}}^{k-1}),(\frac{r_k}{r_{k-1}})^{n_k}(\varphi^{k-1},\nu_{r_{k-1}}^k),(\varphi^k,\nu_{r_k}^k)\big).\end{equation*}
{But now  $(\varphi^k,\nu_{r_k}^k)$ is Gaussian, and each pair $((\varphi^i,\nu_{r_i}^i),(\frac{r_{i+1}}{r_i})^{n_{i+1}}(\varphi^i,\nu_{r_i}^{i+1}))$ with $i = 1 \dots k -1$ is jointly Gaussian by \cref{lem:gp}. Moreover, this singleton, and each pair in the list are independent of one another by construction.} Thus we indeed have a Gaussian vector. 

Finally, if the $r_i$'s are not distinct, the same argument still works: the ``pairs'' just mentioned will simply be larger tuples, concluding the proof of the lemma.
\end{proof}

{We are now ready to prove  \cref{prop:gauss}.}

\begin{proof}[Proof of \cref{prop:gauss}]

Consider the basis {$(e_{n,j,i} = f_{n,i}\psi_{n,j})_{n \in \N_0, j \in \{1, \dots, M_n\},i \in \N}$} of $L^2(\B)$ given by \cref{thm:sph}. The previous lemma implies that 
\begin{equation}\label{eq:joint_manycircles}
\{(h^\B,  f_{n,i}(r)\nu_r^{\psi_{n,j}})\}_{r \in (0,1), n \in \N_0, j \in \{1, \dots, M_n\}, i \in \N}
\end{equation} 
is a Gaussian process. In particular,  
\begin{equation}\label{eq:finally_gaussian}
{(h^\B,f_{n,i}\psi_{n,j})_{n \in \N_0, j \in \{1, \dots, M_n\}, i \in \N}}
\end{equation} 
is also a Gaussian process. Indeed, the random variables in \eqref{eq:finally_gaussian} exist by \cref{cor}, and  we obtain the Gaussianity since they can be defined as almost sure limits of weighted sums of elements in the collection \eqref{eq:joint_manycircles}.

 The fact that $e_{n,j,i}$ form a basis of $L^2(\B)$, together with linearity of $(h^\B, f)$ and that $h^\B$ is zero outside of $\B$, now implies that $(h^\B,f)_{f \in C_c^\infty(\R^d)}$ is a Gaussian process.
 \end{proof}

\bibliographystyle{alpha}
\bibliography{gff_char}
\end{document}